\newtheorem{theorem}{Theorem}[section]
\newtheorem{lemma}{Lemma}[section]
\newtheorem{corollary}{Corollary}[section]
\numberwithin{equation}{section}
\begin{document}
\title{New Kantorovich type inequalities for negative parameters}
\author{Shigeru Furuichi and Hamid Reza Moradi}
\subjclass[2010]{Primary 47A63, Secondary 46L05, 47A60.}
\keywords{Kantorovich type inequality, chaotic order, order reversing operators, log-convex functions, Mond-Pe\v cari\'c method} 
 
\maketitle

\begin{abstract}
We show the following result: Let $A,B\in \mathbb{B}\left( \mathcal{H} \right)$ be two strictly positive operators such that $A\le B$ and $m{{\mathbf{1}}_{\mathcal{H}}}\le B\le M{{\mathbf{1}}_{\mathcal{H}}}$ for some scalars $0<m<M$. Then
\[{{B}^{p}}\le \exp \left( \frac{M{{\mathbf{1}}_{\mathcal{H}}}-B}{M-m}\ln {{m}^{p}}+\frac{B-m{{\mathbf{1}}_{\mathcal{H}}}}{M-m}\ln {{M}^{p}} \right)\le K\left( m,M,p,q \right){{A}^{q}}\quad\text{ for }p\le 0,-1\le q\le 0\]
where $K\left( m,M,p,q \right)$ is the generalized Kantorovich constant with two parameters. In addition, we obtain Kantorovich type inequalities for the chaotic order.
\end{abstract}
\pagestyle{myheadings}
\markboth{\centerline {New Inequalities of the Kantorovich Type With Two Negative Parameters}}
{\centerline {S. Furuichi \& H.R. Moradi}}
\bigskip
\bigskip
\section{\bf Introduction and Preliminaries}
\vskip 0.4 true cm
In what follows, a capital letter means a bounded linear operator on a complex
Hilbert space $\mathcal{H}$. An operator $A$ is said to be {\it positive} (denoted by $A\geq 0$ ) if $\left\langle Ax,x \right\rangle \ge 0$ for all $x\in \mathcal{H},$ and also an operator $A$ is said to be {\it strictly positive} (denoted by $A>0$) if $A$ is positive and invertible. Here ${{\mathbf{1}}_{\mathcal{H}}}$ stands for the identity operator on $\mathcal{H}$. $Sp\left( A \right)$ denotes the usual spectrum of $A$. We take an interval $I \subseteq \mathbb{R}$. If a positive function $f:I\to \left( 0,\infty  \right)$ satisfies
\begin{equation}\label{20}
f\left( \left( 1-v \right)x+vy \right)\le {{f^{1-v}\left( x \right) }}{{ f^{v}\left( y \right) }},	
\end{equation}
for all $x,y\in I$  and $v\in \left[ 0,1 \right]$, then we say that $f$ is a {\it logarithmically convex} (or simply, {\it log-convex}) function on $I$. The  weighted arithmetic-geometric mean inequality readily yields that every log-convex function is also convex. It is worth emphasizing that the function $f\left( t \right)={{t}^{p}}$ is log-convex for $p\le 0$ on $\left( 0,\infty  \right)$.

The \lq\lq L\"owner-Heinz inequality'' asserts that  $0\le A\le B$ ensures ${{A}^{p}}\le {{B}^{p}}$ for any $p\in \left[ 0,1 \right]$. As is well-known, the L\"owner-Heinz inequality does not always hold for $p>1$. The following theorem due to Furuta \cite[Theorem 2.1]{2} (see also \cite[Theorem 4.1]{6}) is the starting point for our discussion.

\begin{theorem}
Let $A,B\in \mathbb{B}\left( \mathcal{H} \right)$ be two strictly positive operators such that $A\le B$ and $m{{\mathbf{1}}_{\mathcal{H}}}\le A\le M{{\mathbf{1}}_{\mathcal{H}}}$  for some scalars $0<m<M$. Then 
	\[{{A}^{p}}\le {{K}}\left( m,M,p \right){{B}^{p}}\le {{\left( \frac{M}{m} \right)}^{p-1}}{{B}^{p}}\quad\text{ for }p\ge 1,\] 
where $K\left( m,M,p \right)$ is a generalized Kantorovich constant in the sense of Furuta \cite{9}:
\begin{equation}\label{9}
K\left( m,M,p \right)=\frac{(m{{M}^{p}}-M{{m}^{p}})}{\left( p-1 \right)\left( M-m \right)}{{\left( \frac{p-1}{p}\frac{{{M}^{p}}-{{m}^{p}}}{m{{M}^{p}}-M{{m}^{p}}} \right)}^{p}}\quad\text{ for }p\in \mathbb{R}.
\end{equation}
\end{theorem}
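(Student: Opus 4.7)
My plan is to combine the Mond--Pe\v cari\'c method with two ingredients: the convexity of $t\mapsto t^{p}$ on $[m,M]$ for $p\ge 1$ and the hypothesis $A\le B$. The single scalar inequality that powers the argument is
\[
\alpha t+\beta \le K(m,M,p)\,t^{p}\qquad(t\ge m),
\]
where $\alpha=(M^{p}-m^{p})/(M-m)>0$ and $\beta=(Mm^{p}-mM^{p})/(M-m)\le 0$ are the slope and intercept of the chord of $t\mapsto t^{p}$ through the points $(m,m^{p})$ and $(M,M^{p})$.

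First I would use convexity of $t^{p}$ on $[m,M]$: the chord bound gives $t^{p}\le \alpha t+\beta$ for $t\in[m,M]$. Applying the functional calculus to $A$, whose spectrum is contained in $[m,M]$, yields $A^{p}\le \alpha A+\beta\,\mathbf{1}_{\mathcal{H}}$. Because $\alpha>0$, the hypothesis $A\le B$ upgrades this to $A^{p}\le \alpha B+\beta\,\mathbf{1}_{\mathcal{H}}$. Note that $B$ need not be bounded above by $M$, but $m\mathbf{1}_{\mathcal{H}}\le A\le B$ forces $Sp(B)\subset[m,\infty)$, which is exactly the range where the scalar inequality above has to be valid.

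Next I would establish that scalar inequality. Maximizing $\varphi(t):=(\alpha t+\beta)/t^{p}$ on $(0,\infty)$ by solving $\varphi'(t)=0$ produces the critical point $t^{*}=p(-\beta)/((p-1)\alpha)$; a short computation shows that $t^{*}\in[m,M]$ and that $\varphi(t^{*})$ equals precisely the constant $K(m,M,p)$ from the definition of the generalized Kantorovich constant. Since $\varphi(t)\to -\infty$ as $t\to 0^{+}$ and $\varphi(t)\to 0$ as $t\to\infty$, the point $t^{*}$ is the global maximum, so $\alpha t+\beta\le K(m,M,p)\,t^{p}$ holds for every $t>0$. Invoking the functional calculus for $B$ then gives $\alpha B+\beta\,\mathbf{1}_{\mathcal{H}}\le K(m,M,p)\,B^{p}$, and chaining this with the operator inequality of the previous paragraph delivers the first assertion $A^{p}\le K(m,M,p)\,B^{p}$.

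For the second inequality, since $B^{p}>0$ it is enough to verify the purely scalar estimate $K(m,M,p)\le (M/m)^{p-1}$. Setting $h:=M/m>1$ reduces this to a one-variable inequality in $h$ and $p$ that can be dispatched by elementary calculus, or recognized as the standard upper estimate for the generalized Kantorovich constant. I expect the main technical obstacle to be verifying that the critical point $t^{*}$ actually lies in $[m,M]$, so that the unconstrained maximum of $\varphi$ coincides with the one suggested by the chord geometry, and keeping careful track of the signs of $\alpha$ and $\beta$ (and of the direction of each inequality) when passing between scalar and operator statements via the functional calculus.
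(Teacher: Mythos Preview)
Your proposal is correct. Note that the paper does not actually prove this statement: Theorem~1.1 is quoted from Furuta~\cite{2} as background, so there is no ``paper's own proof'' to compare against. That said, your argument is precisely the Mond--Pe\v cari\'c method that underlies the whole paper; in particular, it is the specialization $f(t)=g(t)=t^{p}$, $\alpha$ chosen so that $\beta=0$, of Theorem~\ref{th1.2} (case (i) with $g$ increasing convex), and it also matches the optimization carried out in Corollary~\ref{cor2.3}. One small remark: you do not actually need to verify that $t^{*}\in[m,M]$, since you correctly maximize $\varphi$ over all of $(0,\infty)$ and only need the scalar inequality on $Sp(B)\subset[m,\infty)$; the location of $t^{*}$ is irrelevant to the validity of $\alpha t+\beta\le K(m,M,p)\,t^{p}$ for $t>0$.
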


In \cite[Theorem 2.1]{3}, Mi\'ci\'c, Pe\v cari\'c and Seo proved some fascinating results about the function
preserving the operator order, under a general setting:
\begin{theorem}\label{th1.2}
 Let $A$ and $B$ be two strictly positive operators on a Hilbert space $\mathcal{H}$ satisfying $m{{\mathbf{1}}_{\mathcal{H}}}\le A\le M{{\mathbf{1}}_{\mathcal{H}}}$  for some scalars $0<m<M$. Let $f:\left[ m,M \right]\to \mathbb{R}$ be a convex function and $g:I\to \mathbb{R}$, where $I$ be any interval containing $Sp\left( B \right)\cup \left[ m,M \right]$. Suppose that either of the following conditions holds: (i) $g$ is increasing convex on $I$, or (ii) $g$ is decreasing concave on $I$. If $A\le B$, then for a given $\alpha >0$ in the case (i) or $\alpha <0$ in the case (ii)
\[f\left( A \right)\le \alpha g\left( B \right)+\beta {{\mathbf{1}}_{\mathcal{H}}},\]
holds for
\begin{equation}\label{12}
\beta =\underset{m\le t\le M}{\mathop{\max }}\,\left\{ {{a}_{f}}t+{{b}_{f}}-\alpha g\left( t \right) \right\},
\end{equation}
where
\[{{a}_{f}}:= \frac{f\left( M \right)-f\left( m \right)}{M-m}\quad\text{ and }\quad{{b}_{f}}:= \frac{Mf\left( m \right)-mf\left( M \right)}{M-m}.\]
\end{theorem}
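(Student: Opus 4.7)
The plan is to reduce the operator inequality to a chain of scalar inequalities by testing on unit vectors and invoking the scalar Jensen inequality via the spectral theorem; this avoids any appeal to operator monotonicity or operator convexity of $g$, neither of which is assumed.

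First, since $f$ is convex on $[m,M]$, the secant line estimate $f(t)\le a_f t+b_f$ holds for $t\in[m,M]$, and since $Sp(A)\subseteq[m,M]$ functional calculus lifts it to $f(A)\le a_f A+b_f\mathbf{1}_\mathcal{H}$. Pairing with an arbitrary unit vector $x\in\mathcal{H}$ and writing $\tau:=\langle Ax,x\rangle$ (which lies in $[m,M]$) and $\sigma:=\langle Bx,x\rangle$, this yields $\langle f(A)x,x\rangle\le a_f\tau+b_f$. The definition of $\beta$ in \eqref{12}, applied at $t=\tau$, then gives the scalar bound $a_f\tau+b_f\le\alpha g(\tau)+\beta$.

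The second step passes from $\alpha g(\tau)$ to $\alpha\langle g(B)x,x\rangle$ through the intermediate value $\alpha g(\sigma)$. Since $A\le B$ we have $\tau\le\sigma$, and $m\le\sigma$ ensures $\sigma\in I$. In case (i), $\alpha>0$ and $g$ increasing give $\alpha g(\tau)\le\alpha g(\sigma)$, while $g$ convex and the scalar Jensen inequality $g(\langle Bx,x\rangle)\le\langle g(B)x,x\rangle$ (obtained by integrating against the spectral measure of $B$ for $x$) give $\alpha g(\sigma)\le\alpha\langle g(B)x,x\rangle$. In case (ii), $\alpha<0$ and $g$ decreasing give $g(\tau)\ge g(\sigma)$, which becomes $\alpha g(\tau)\le\alpha g(\sigma)$ after multiplying by the negative $\alpha$; and $g$ concave gives the reverse Jensen inequality $g(\langle Bx,x\rangle)\ge\langle g(B)x,x\rangle$, again flipped by $\alpha<0$ to $\alpha g(\sigma)\le\alpha\langle g(B)x,x\rangle$.

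Chaining these estimates produces $\langle f(A)x,x\rangle\le\alpha\langle g(B)x,x\rangle+\beta$ for every unit vector $x$, which is equivalent to $f(A)\le\alpha g(B)+\beta\mathbf{1}_\mathcal{H}$. I expect the only delicate point to be the bookkeeping of signs in case (ii), where the concavity of $g$ and negativity of $\alpha$ must combine correctly so that each reversal is compensated by another. All the substantive ingredients—functional calculus for the secant inequality, the definition of $\beta$ at the single scalar $\tau$, and the scalar Jensen inequality read off from the spectral measure of $B$—are entirely standard.
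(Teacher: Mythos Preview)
Your argument is correct and is exactly the standard Mond--Pe\v cari\'c template: secant bound $f(t)\le a_f t+b_f$ lifted via functional calculus, evaluation of the definition of $\beta$ at the scalar $\tau=\langle Ax,x\rangle$, then monotonicity of $g$ to pass from $\tau$ to $\sigma=\langle Bx,x\rangle$, and finally scalar Jensen (or reverse Jensen) for $g$ applied to the spectral measure of $B$, with the sign of $\alpha$ absorbing the flips in case~(ii). The paper does not give its own proof of Theorem~\ref{th1.2}---it is quoted from \cite[Theorem~2.1]{3}---but the proof it supplies for its main result, Theorem~\ref{a}, follows precisely this scheme (written out only for case~(i)), so your approach coincides with the method the paper itself employs.
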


The following converse of Theorem \ref{th1.2} was proven in \cite[Theorem 2.1]{4}:
\begin{theorem}\label{c}
Let $A$ and $B$ be two strictly positive operators on a Hilbert space $\mathcal{H}$ satisfying $m{{\mathbf{1}}_{\mathcal{H}}}\le B\le M{{\mathbf{1}}_{\mathcal{H}}}$  for some scalars $0<m<M$. Let $f:\left[ m,M \right]\to \mathbb{R}$ be a convex function and $g:I\to \mathbb{R}$, where $I$ be any interval containing $Sp\left( A \right)\cup \left[ m,M \right]$. Suppose that either of the following conditions holds: (i) $g$ is decreasing convex on $I$, or (ii) $g$ is increasing concave on $I$. If $A\le B$, then for a given $\alpha >0$ in the case (i) or $\alpha <0$ in the case (ii)
\begin{equation}\label{11}
f\left( B \right)\le \alpha g\left( A \right)+\beta {{\mathbf{1}}_{\mathcal{H}}},
\end{equation}
holds with $\beta$ as \eqref{12}.
\end{theorem}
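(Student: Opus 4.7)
The plan is to adapt the Mond--Pe\v cari\'c method used for Theorem \ref{th1.2}, but with the roles of the two operators reversed so that the spectral constraint applies to $B$ instead of $A$. First, I would exploit the convexity of $f$ on $[m,M]$ to write the chord majorization $f(t)\le a_f t + b_f$ for every $t\in [m,M]$. The very definition of $\beta$ in \eqref{12} is equivalent to the scalar inequality $a_f t + b_f \le \alpha g(t) + \beta$ on the same interval. Concatenating these two estimates yields the pointwise bound $f(t)\le \alpha g(t)+\beta$ for all $t\in [m,M]$.

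Since by hypothesis $m{\mathbf{1}}_{\mathcal{H}}\le B\le M{\mathbf{1}}_{\mathcal{H}}$, we have $Sp(B)\subseteq [m,M]$, and the spectral theorem for $B$ immediately lifts the previous scalar estimate to the operator inequality
\[ f(B)\le \alpha g(B)+\beta\, {\mathbf{1}}_\mathcal{H}. \]
Note that at this intermediate stage only $B$ appears; the hypothesis $A\le B$ has not yet been used.

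The decisive step is then to replace $g(B)$ by $g(A)$. In case (i), $g$ is decreasing on $I\supseteq Sp(A)\cup [m,M]$, so together with $A\le B$ and the (operator) antitonicity of $g$ one gets $g(B)\le g(A)$; multiplication by $\alpha>0$ preserves the inequality. In case (ii), the dual argument applies: $g$ is increasing and $A\le B$ yields $g(A)\le g(B)$, while the negative factor $\alpha<0$ flips the direction. In either case we arrive at $\alpha g(B)\le \alpha g(A)$, which combined with the displayed inequality produces \eqref{11}. The one point that is not merely routine is the passage from the scalar monotonicity of $g$ to the corresponding operator inequality relating $g(A)$ and $g(B)$; this will be the main obstacle in applying the result, but it will be unproblematic for our intended use, since the functions to which we shall specialise (notably $g(t)=t^q$ with $q\in[-1,0]$) are operator antitone on $(0,\infty)$ by virtue of the L\"owner--Heinz inequality.
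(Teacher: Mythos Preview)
Your argument has a real gap at the step where you pass from $\alpha g(B)$ to $\alpha g(A)$. You invoke ``the (operator) antitonicity of $g$'' to conclude $g(B)\le g(A)$ from $A\le B$, but the hypotheses of the theorem only give you that $g$ is \emph{scalar} decreasing and convex (case (i)) or scalar increasing and concave (case (ii)). Scalar monotonicity does not, in general, lift to operator monotonicity; that is precisely the phenomenon the whole paper is built around. You acknowledge this and argue that the specialisations you care about (e.g.\ $g(t)=t^q$, $-1\le q\le 0$) happen to be operator antitone, but that only salvages the corollaries, not the theorem as stated.

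The paper does not reprove Theorem~\ref{c} (it is quoted from \cite{4}), but the correct mechanism is visible in the paper's proof of Theorem~\ref{a}, and it shows exactly what the convexity/concavity assumption on $g$ is for. One works with quadratic forms: for a unit vector $x$, the chord bound and the definition of $\beta$ give
\[
\langle f(B)x,x\rangle \le a_f\langle Bx,x\rangle + b_f \le \alpha g(\langle Bx,x\rangle)+\beta .
\]
Since $A\le B$ means $\langle Ax,x\rangle\le\langle Bx,x\rangle$, the \emph{scalar} monotonicity of $g$ (decreasing in (i), increasing in (ii)) together with the sign of $\alpha$ yields $\alpha g(\langle Bx,x\rangle)\le \alpha g(\langle Ax,x\rangle)$. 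The final step uses Jensen's inequality for the spectral measure of $A$: convexity of $g$ gives $g(\langle Ax,x\rangle)\le \langle g(A)x,x\rangle$ in case (i), and concavity gives the reverse in case (ii); after multiplication by $\alpha$ both cases produce $\alpha g(\langle Ax,x\rangle)\le \alpha\langle g(A)x,x\rangle$. Thus the convexity/concavity hypothesis on $g$ is not decorative---it is precisely what replaces the operator monotonicity you are missing. Rewriting your proof along these lines closes the gap and matches the approach of \cite{4} and of Theorem~\ref{a}.
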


\medskip

This paper is organized by four sections. The proof of our main
result, Theorem \ref{a}, is given in Section \ref{s1}. The essential idea is to consider the log-convex function instead of the convex function in Theorem \ref{c}.  As applications, in Section \ref{s3}, we show some characterizations of the chaotic order. Further results based on the Mond-Pe\v cari\'c method are given in Section \ref{s4}.
\section{\bf Functions reversing operator order}\label{s1}
\vskip 0.4 true cm
In the sequel, ${{a}_{f}}$ and ${{b}_{f}}$ will refer to those of Theorem \ref{th1.2}.
Our principal result is the following theorem. The role of \eqref{20} is clearly brought out in our proof.
\begin{theorem}\label{a}
Let $A,B\in \mathbb{B}\left( \mathcal{H} \right)$ be two self-adjoint operators such that $m{{\mathbf{1}}_{\mathcal{H}}}\le B\le M{{\mathbf{1}}_{\mathcal{H}}}$ for some scalars $m<M$. Let $f:\left[ m,M \right]\to \left( 0,\infty  \right)$ be a log-convex function and $g:I\to \mathbb{R}$, where $I$ be any interval containing $Sp\left( A \right)\cup \left[ m,M \right]$. Suppose that either of the
following conditions holds: (i) $g$ is decreasing convex on $I$, or (ii) $g$ is increasing concave on $I$.  If $A\le B$, then for a given $\alpha >0$ in the case (i) or $\alpha <0$ in the case (ii)
\begin{equation}\label{ineq00_theorem_2_1}
f\left( B \right)\le \exp \left( \frac{M{{\mathbf{1}}_{\mathcal{H}}}-B}{M-m}\ln f\left( m \right)+\frac{B-m{{\mathbf{1}}_{\mathcal{H}}}}{M-m}\ln f\left( M \right) \right)\le \alpha g\left( A \right)+\beta {{\mathbf{1}}_{\mathcal{H}}}, 
\end{equation}
holds with $\beta$ as in \eqref{12}.
\end{theorem}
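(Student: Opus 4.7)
The plan is to split the double inequality into its two halves and treat them separately, using log-convexity for the first and then recycling Theorem \ref{c} for the second.

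For the left inequality $f(B)\le \exp\!\bigl(\tfrac{M\mathbf{1}_{\mathcal H}-B}{M-m}\ln f(m)+\tfrac{B-m\mathbf{1}_{\mathcal H}}{M-m}\ln f(M)\bigr)$, I would start at the scalar level. Writing any $t\in[m,M]$ as $t=(1-v)m+vM$ with $v=(t-m)/(M-m)\in[0,1]$, the log-convexity assumption \eqref{20} on $f$ yields
\[
f(t)\le f(m)^{1-v}f(M)^{v}=\exp\!\Bigl(\tfrac{M-t}{M-m}\ln f(m)+\tfrac{t-m}{M-m}\ln f(M)\Bigr).
\]
Since $m\mathbf{1}_{\mathcal H}\le B\le M\mathbf{1}_{\mathcal H}$, applying the continuous functional calculus to this scalar inequality on $[m,M]$ delivers the first operator inequality directly.

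For the right inequality I would introduce the auxiliary function
\[
F(t):=\exp\!\Bigl(\tfrac{M-t}{M-m}\ln f(m)+\tfrac{t-m}{M-m}\ln f(M)\Bigr),\qquad t\in[m,M],
\]
and observe three things: (a) $F(m)=f(m)$ and $F(M)=f(M)$, so $F$ agrees with $f$ at the endpoints; (b) $\ln F$ is affine in $t$, hence $F$ is the exponential of an affine function and is therefore convex on $[m,M]$; (c) the linear interpolant of $F$ between $(m,F(m))$ and $(M,F(M))$ coincides with that of $f$, so
\[
a_{F}=\frac{F(M)-F(m)}{M-m}=a_{f},\qquad b_{F}=\frac{MF(m)-mF(M)}{M-m}=b_{f}.
\]
Now $F$ is a convex function on $[m,M]$ and the assumptions on $g$, $A$, $B$, $\alpha$ are exactly those of Theorem \ref{c}. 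Applying that theorem with $F$ in place of $f$ gives
\[
F(B)\le \alpha g(A)+\beta' \mathbf{1}_{\mathcal H},\qquad \beta'=\max_{m\le t\le M}\{a_{F}t+b_{F}-\alpha g(t)\},
\]
and by the identification $a_{F}=a_{f}$, $b_{F}=b_{f}$ we conclude $\beta'=\beta$, which is precisely the constant defined in \eqref{12}. Combining the two halves finishes the proof.

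The main conceptual point, rather than a technical obstacle, is step (c) above: one must notice that the log-affine interpolant $F$ has the same chord data as $f$, so the Mond--Pe\v cari\'c constant $\beta$ computed from $f$ governs the bound for $F(B)$ as well. Routine verification that $F$ is convex (via $F''=F\cdot(L')^{2}\ge 0$ for the affine $L=\ln F$) and checking that the hypotheses of Theorem \ref{c} on $g$ and $\alpha$ pass through unchanged are the only remaining details.
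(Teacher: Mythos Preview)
Your proof is correct. The first half is identical to the paper's: log-convexity gives the scalar bound $f(t)\le f(m)^{(M-t)/(M-m)}f(M)^{(t-m)/(M-m)}$, and functional calculus lifts it to operators. For the second half the paper proceeds differently in packaging, though not in substance: rather than invoking Theorem~\ref{c} as a black box, it continues the scalar chain one step further to $f(t)\le F(t)\le a_f t+b_f$, passes to inner products $\langle \cdot\, x,x\rangle$, subtracts $\alpha g(\langle Bx,x\rangle)$, bounds by $\beta$, and then uses the monotonicity and convexity of $g$ together with $A\le B$ to replace $g(\langle Bx,x\rangle)$ by $\langle g(A)x,x\rangle$. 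In other words, the paper inlines the proof of Theorem~\ref{c} rather than citing it.

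Your route is arguably cleaner and more transparent about what is actually new: the only fresh content is the refinement $f(B)\le F(B)$, while the bound $F(B)\le\alpha g(A)+\beta\mathbf{1}_{\mathcal H}$ is a direct instance of Theorem~\ref{c} once one notices---as you correctly emphasize---that $F$ is convex and shares endpoint values (hence $a_F=a_f$, $b_F=b_f$) with $f$. The paper's unpacked version is more self-contained but hides this modular structure.
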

\begin{proof}
We prove the inequalities (\ref{ineq00_theorem_2_1}) under the assumption (i). It is immediate to see that
\begin{equation}\label{5}
f\left( t \right)\le {{ f^{\frac{M-t}{M-m}}\left( m \right) }}{{ f^{\frac{t-m}{M-m}}\left( M \right) }}\le L\left( t \right)\quad\text{ for }m\le t\le M,
\end{equation}
where
\[L\left( t \right)=\frac{M-t}{M-m}f\left( m \right)+\frac{t-m}{M-m}f\left( M \right)={{a}_{f}}t+{{b}_{f}}.\]
By applying the standard operational calculus of self-adjoint operator $B$ to \eqref{5}, we obtain for every unit vector $x\in \mathcal{H}$,
\[\left\langle f\left( B \right)x,x \right\rangle \le \left\langle \exp \left( \frac{M{{\mathbf{1}}_{\mathcal{H}}}-B}{M-m}\ln f\left( m \right)+\frac{B-m{{\mathbf{1}}_{\mathcal{H}}}}{M-m}\ln f\left( M \right) \right)x,x \right\rangle \le {{a}_{f}}\left\langle Bx,x \right\rangle +{{b}_{f}},\]
and from this it follows that
\[\begin{aligned}
& \left\langle f\left( B \right)x,x \right\rangle -\alpha g\left( \left\langle Bx,x \right\rangle  \right) \\ 
&\quad \le \left\langle \exp \left( \frac{M{{\mathbf{1}}_{\mathcal{H}}}-B}{M-m}\ln f\left( m \right)+\frac{B-m{{\mathbf{1}}_{\mathcal{H}}}}{M-m}\ln f\left( M \right) \right)x,x \right\rangle -\alpha g\left( \left\langle Bx,x \right\rangle  \right) \\ 
&\quad \le {{a}_{f}}\left\langle Bx,x \right\rangle +{{b}_{f}}-\alpha g\left( \left\langle Bx,x \right\rangle  \right) \\ 
&\quad \le \underset{m\le t\le M}{\mathop{\max }}\,\left\{ {{a}_{f}}t+{{b}_{f}}-\alpha g\left( t \right) \right\}. \\ 
\end{aligned}\]
Here we put $t:=\langle Bx,x\rangle$, then $m \leq t \leq M$.
Whence
\[\begin{aligned}
 \left\langle f\left( B \right)x,x \right\rangle &\le \left\langle \exp \left( \frac{M{{\mathbf{1}}_{\mathcal{H}}}-B}{M-m}\ln f\left( m \right)+\frac{B-m{{\mathbf{1}}_{\mathcal{H}}}}{M-m}\ln f\left( M \right) \right)x,x \right\rangle  \\ 
& \le \alpha g\left( \left\langle Bx,x \right\rangle  \right)+\beta  \\ 
& \le \alpha g\left( \left\langle Ax,x \right\rangle  \right)+\beta  \quad \text{(since $A\le B$ and $g$ is decreasing)}\\ 
& \le \alpha \left\langle g\left( A \right)x,x \right\rangle +\beta \quad \text{(since $g$ is convex)}  
\end{aligned}\]
and the assertion follows.
\end{proof}

\medskip

The following corollary improves the result in \cite[Corollary 2.5]{4}. In fact, if we put $f\left( t \right)={{t}^{p}}$ and $g\left( t \right)={{t}^{q}}$ with $p\leq 0$ and $q\leq 0$, we get:
\begin{corollary}\label{8}
	Let $A,B\in \mathbb{B}\left( \mathcal{H} \right)$ be two strictly positive operators such that $A\le B$ and $m{{\mathbf{1}}_{\mathcal{H}}}\le B\le M{{\mathbf{1}}_{\mathcal{H}}}$ for some scalars $0<m<M$. Then for a given $\alpha >0$,
\begin{equation}\label{18}
{{B}^{p}}\le \exp \left( \frac{M{{\mathbf{1}}_{\mathcal{H}}}-B}{M-m}\ln {{m}^{p}}+\frac{B-m{{\mathbf{1}}_{\mathcal{H}}}}{M-m}\ln {{M}^{p}} \right)\le \alpha {{A}^{q}}+\beta {{\mathbf{1}}_{\mathcal{H}}},\quad (p\leq 0,\,\, q \leq 0)
\end{equation}
holds, where $\beta $ is defined as
\begin{equation}\label{17}
\beta =\left\{ \begin{array}{ll}
\alpha \left( q-1 \right){{\left( \frac{{{M}^{p}}-{{m}^{p}}}{\alpha q\left( M-m \right)} \right)}^{\frac{q}{q-1}}}+\frac{M{{m}^{p}}-m{{M}^{p}}}{M-m}&\text{ if }m\leq {{\left( \frac{{{M}^{p}}-{{m}^{p}}}{\alpha q\left( M-m \right)} \right)}^{\frac{1}{q-1}}}\leq M \\ 
\max \left\{ {{m}^{p}}-\alpha {{m}^{q}},{{M}^{p}}-\alpha {{M}^{q}} \right\}&\text{ otherwise} \\ 
\end{array} \right..
\end{equation}
Especially, by setting $p=q$ in \eqref{18}, we reach
\[{{B}^{p}}\le \exp \left( \frac{M{{\mathbf{1}}_{\mathcal{H}}}-B}{M-m}\ln {{m}^{p}}+\frac{B-m{{\mathbf{1}}_{\mathcal{H}}}}{M-m}\ln {{M}^{p}} \right)\le \alpha {{A}^{p}}+\beta {{\mathbf{1}}_{\mathcal{H}}}\quad (p\leq 0),\]
where
\begin{equation}\label{be}
\beta =\left\{ \begin{array}{ll}
 \alpha \left( p-1 \right){{\left( \frac{{{M}^{p}}-{{m}^{p}}}{\alpha p\left( M-m \right)} \right)}^{\frac{p}{p-1}}}+\frac{M{{m}^{p}}-m{{M}^{p}}}{M-m}&\text{ if }m\leq {{\left( \frac{{{M}^{p}}-{{m}^{p}}}{\alpha p\left( M-m \right)} \right)}^{\frac{1}{p-1}}}\leq M \\ 
 \max \left\{ {{m}^{p}}-\alpha {{m}^{p}},{{M}^{p}}-\alpha {{M}^{p}} \right\}&\text{ otherwise} \\ 
\end{array} \right..
\end{equation}	
\end{corollary}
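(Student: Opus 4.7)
The plan is to apply Theorem \ref{a} with the choices $f(t)=t^{p}$ and $g(t)=t^{q}$ on $[m,M]\subset(0,\infty)$, and then to evaluate the resulting constant $\beta$ from \eqref{12} in closed form.

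First, I would verify the hypotheses of Theorem \ref{a}. The function $f(t)=t^{p}$ is positive and log-convex on $(0,\infty)$ for $p\le 0$, as recorded in the preliminaries. For $g(t)=t^{q}$ with $q\le 0$, one has $g'(t)=qt^{q-1}\le 0$ and $g''(t)=q(q-1)t^{q-2}\ge 0$ on $(0,\infty)$, so $g$ is decreasing and convex; this is case (i) of Theorem \ref{a} with $\alpha>0$. The spectrum condition $Sp(A)\cup[m,M]\subset(0,\infty)$ follows because $A>0$. With $a_{f}=\frac{M^{p}-m^{p}}{M-m}$ and $b_{f}=\frac{Mm^{p}-mM^{p}}{M-m}$, Theorem \ref{a} then yields the full chain \eqref{18} with
\[
\beta=\max_{m\le t\le M}\bigl\{a_{f}t+b_{f}-\alpha t^{q}\bigr\}.
\]

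The remaining task is to compute this maximum explicitly. Set $h(t)=a_{f}t+b_{f}-\alpha t^{q}$. Since $h''(t)=-\alpha q(q-1)t^{q-2}\le 0$ on $(0,\infty)$, the function $h$ is concave, so its maximum on $[m,M]$ is attained either at a unique interior critical point or at an endpoint. Solving $h'(t_{0})=0$ gives
\[
t_{0}=\left(\frac{M^{p}-m^{p}}{\alpha q(M-m)}\right)^{\!1/(q-1)}\!.
\]
If $t_{0}\in[m,M]$, I would use the critical-point identity $\alpha q t_{0}^{q-1}=a_{f}$ to rewrite $\alpha t_{0}^{q}=a_{f}t_{0}/q$, substitute into $h(t_{0})$, and group terms; this produces the first branch of \eqref{17}. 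If $t_{0}\notin[m,M]$, concavity forces the maximum to lie at an endpoint, and the direct simplifications $h(m)=m^{p}-\alpha m^{q}$ and $h(M)=M^{p}-\alpha M^{q}$ yield the second branch of \eqref{17}. The special case $p=q$ then requires no additional work: substituting $p=q$ in \eqref{17} gives \eqref{be}.

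The main obstacle is the algebraic simplification of $h(t_{0})$ into the explicit form appearing in \eqref{17}; this is a routine computation but demands careful bookkeeping with the exponent $q/(q-1)$ arising from $t_{0}^{q}=\bigl(a_{f}/(\alpha q)\bigr)^{q/(q-1)}$. All other steps reduce to sign-checking the relevant derivatives and invoking Theorem \ref{a} with the correct assignment of $f$ and $g$.
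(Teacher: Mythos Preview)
Your proposal is correct and follows exactly the route the paper takes: apply Theorem \ref{a} with $f(t)=t^{p}$, $g(t)=t^{q}$ in case (i), and then read off $\beta$ from \eqref{12}. The paper in fact states only the substitution and leaves the explicit evaluation of $\beta$ implicit, so your concavity analysis of $h(t)=a_{f}t+b_{f}-\alpha t^{q}$ and the endpoint simplifications $a_{f}m+b_{f}=m^{p}$, $a_{f}M+b_{f}=M^{p}$ supply exactly the missing routine detail.
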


\medskip

If we choose $\alpha $ such that $\beta =0$ in Theorem \ref{a}, then we obtain the
following corollary. For completeness, we sketch the proof.
\begin{corollary}\label{cor2.3}
Let $A,B\in \mathbb{B}\left( \mathcal{H} \right)$ be two strictly positive operators such that $A\le B$ and $m{{\mathbf{1}}_{\mathcal{H}}}\le B\le M{{\mathbf{1}}_{\mathcal{H}}}$ for some scalars $0<m<M$. Let $f:\left[ m,M \right]\to \left( 0,\infty  \right)$ be a log-convex function and $g:I\to \mathbb{R}$ be a continuous function, where $I$ is an interval containing $Sp\left( A \right)\cup \left[ m,M \right]$.
 If $g$ is a positive decreasing convex function on $\left[ m,M \right]$, then
	\begin{equation}\label{7}
f\left( B \right)\le \exp \left( \frac{M{{\mathbf{1}}_{\mathcal{H}}}-B}{M-m}\ln f\left( m \right)+\frac{B-m{{\mathbf{1}}_{\mathcal{H}}}}{M-m}\ln f\left( M \right) \right)\le \underset{m\le t\le M}{\mathop{\max }}\,\left\{ \frac{{{a}_{f}}t+{{b}_{f}}}{g\left( t \right)} \right\}g\left( A \right).
	\end{equation}
Moreover if $p \leq 0$ and $-1 \leq q \leq 0$, then
\begin{equation}\label{15}
{{B}^{p}}\le \exp \left( \frac{M{{\mathbf{1}}_{\mathcal{H}}}-B}{M-m}\ln {{m}^{p}}+\frac{B-m{{\mathbf{1}}_{\mathcal{H}}}}{M-m}\ln {{M}^{p}} \right)\le {{K}}\,\left( m,M,p,q \right){{A}^{q}},
\end{equation}
where ${K}\left( m,M,p,q \right)$ (see, e.g., \cite[Theorem 3.1]{3}) is defined as 
	\begin{equation}\label{16}
{K}\left( m,M,p,q \right)=\left\{ \begin{array}{ll}
\frac{(m{{M}^{p}}-M{{m}^{p}})}{\left( q-1 \right)\left( M-m \right)}{{\left( \frac{q-1}{q}\frac{{{M}^{p}}-{{m}^{p}}}{m{{M}^{p}}-M{{m}^{p}}} \right)}^{q}}&\text{ if }m\leq \frac{q\left( m{{M}^{p}}-M{{m}^{p}} \right)}{\left( q-1 \right)\left( {{M}^{p}}-{{m}^{p}} \right)}\leq M \\ 
\max \left\{ {{m}^{p-q}},{{M}^{p-q}} \right\}&\text{ otherwise} \\ 
\end{array} \right..
\end{equation}

In particular, if $p=q$ in \eqref{15}, we have
\begin{equation}\label{2_8}
{{B}^{p}}\le \exp \left( \frac{M{{\mathbf{1}}_{\mathcal{H}}}-B}{M-m}\ln {{m}^{p}}+\frac{B-m{{\mathbf{1}}_{\mathcal{H}}}}{M-m}\ln {{M}^{p}} \right)\le {{K}}\left( m,M,p \right){{A}^{p}}\quad\text{ for }p\leq 0,
\end{equation}
where $K\left( m,M,p \right)$ is defined as \eqref{9}.
\end{corollary}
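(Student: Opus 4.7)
The plan is to specialize Theorem~\ref{a} by tuning the free parameter $\alpha > 0$ so that the additive constant $\beta$ from \eqref{12} vanishes. Since $g$ is strictly positive on $[m,M]$, the condition $a_f t + b_f - \alpha g(t) \le 0$ on $[m,M]$ is equivalent to $\alpha \ge (a_f t + b_f)/g(t)$ there, so the smallest admissible $\alpha$ is
\[\alpha^* := \max_{m\le t\le M}\frac{a_f t + b_f}{g(t)},\]
which is attained by continuity on the compact interval. Evaluating $a_f t + b_f$ at $t = m$ and $t = M$ recovers $f(m), f(M) > 0$, so $\alpha^* > 0$; then Theorem~\ref{a} with this $\alpha^*$ and $\beta = 0$ immediately yields \eqref{7}.

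For \eqref{15} I would set $f(t) = t^p$ (log-convex for $p \le 0$, as noted after \eqref{20}) and $g(t) = t^q$ (positive, decreasing, and convex on $(0,\infty)$ for $q \le 0$ since $g''(t) = q(q-1)t^{q-2} \ge 0$). With $a_f = (M^p - m^p)/(M-m)$ and $b_f = (Mm^p - mM^p)/(M-m)$, the optimization reduces to
\[\alpha^* = \max_{m\le t\le M} h(t), \qquad h(t) := a_f\, t^{1-q} + b_f\, t^{-q}.\]
A direct differentiation gives a unique critical point $t^* = qb_f/((1-q)a_f)$, which after rearrangement equals the quantity $q(mM^p - Mm^p)/((q-1)(M^p - m^p))$ governing the case distinction in \eqref{16}. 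If $t^* \in [m,M]$, then $\alpha^* = h(t^*)$; otherwise $h$ is monotone on $[m,M]$ and $\alpha^* = \max\{h(m), h(M)\} = \max\{m^{p-q}, M^{p-q}\}$, since at an endpoint $h$ equals $f/g$.

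The main technical step is showing that $h(t^*)$ simplifies to the closed form in the first branch of \eqref{16}; this is a routine but sign-sensitive algebraic manipulation, since $a_f \le 0$, $b_f \ge 0$, and $q - 1 < 0$ under the stated hypotheses. Finally, \eqref{2_8} follows by specializing $q = p$ in \eqref{16}: the first branch collapses to the Furuta constant $K(m,M,p)$ of \eqref{9}, and the critical-point condition $m \le t^* \le M$ can be verified to hold automatically in this regime by an elementary monotonicity check, so the endpoint branch does not occur.
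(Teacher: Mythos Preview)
Your approach is essentially the same as the paper's: pick $\alpha = \alpha^{*} := \max_{m\le t\le M}(a_f t+b_f)/g(t)$ so that $\beta=0$ in Theorem~\ref{a}, then specialize to $f(t)=t^{p}$, $g(t)=t^{q}$ and optimize $h(t)=a_f t^{1-q}+b_f t^{-q}$ over $[m,M]$.

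One step you assert but do not justify: that the critical point $t^{*}$ is a \emph{maximizer} of $h$ (and that $h$ is monotone on $[m,M]$ otherwise). The paper supplies this by computing
\[
h''(t)=t^{-q-2}\bigl\{q(q-1)a_f\,t+q(q+1)b_f\bigr\}
\]
and observing that both bracketed terms are $\le 0$ because $a_f\le 0$, $b_f\ge 0$, $q(q-1)\ge 0$, and $q(q+1)\le 0$. The last sign requires $-1\le q\le 0$, and this is in fact the only place in the argument where the lower bound $q\ge -1$ is used; without it $t^{*}$ could be a minimum and the endpoint evaluation would not give the maximum. You should make this concavity check explicit.
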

\begin{proof}
From the condition on the function $g$, 
we have 
$\beta \leq \underset{m\le t\le M}{\mathop{\max }}\,\left\{ {{a}_{f}}t+{{b}_{f}} \right\}-\alpha \underset{m\le t\le M}{\mathop{\min }}\,\left\{ g\left( t \right) \right\}$. When $\beta =0$, we have
$\alpha \leq \frac{\underset{m\le t\le M}{\mathop{\max }}\,\left\{ {{a}_{f}}t+{{b}_{f}} \right\}}{\underset{m\le t\le M}{\mathop{\min }}\,\left\{ g\left( t \right) \right\}}$. Thus we have the inequalities (\ref{7}) taking $\alpha :=\underset{m\le t\le M}{\mathop{\max }}\,\left\{ \frac{{{a}_{f}}t+{{b}_{f}}}{g\left( t \right)} \right\}$ which is less than or equal to  $\frac{\underset{m\le t\le M}{\mathop{\max }}\,\left\{ {{a}_{f}}t+{{b}_{f}} \right\}}{\underset{m\le t\le M}{\mathop{\min }}\,\left\{ g\left( t \right) \right\}}$.

If we take $f\left( t \right)={{t}^{p}}$ and $g\left( t \right)={{t}^{q}}$ with $p\leq 0$ and $-1 \leq q \leq 0$ for $t>0$ in \eqref{7}, then we have $a_{t^p} \leq 0$, $t_{b^{p}} \geq 0$ and $\alpha =\underset{m\le t\le M}{\mathop{\max }}\,\left\{ {{a}_{{{t}^{p}}}}{{t}^{1-q}}+{{b}_{{{t}^{p}}}}{{t}^{-q}} \right\}$. Then we set $h_{p,q}(t):=  {{a}_{{{t}^{p}}}}{{t}^{1-q}}+{{b}_{{{t}^{p}}}}{{t}^{-q}}$. We easily calculate 
$$
h_{p,q}'(t)=t^{-q-1}\left\{(1-q)a_{t^{p}}t-qb_{t^{p}}\right\},\quad
h_{p,q}''(t) = t^{-q-2} \left\{q(q-1)a_{t^{p}} t +q(q+1) b_{t^{p}}\right\}\leq 0.
$$ 
We find $\alpha= \left(\frac{b_{t^p}}{1-q}\right) \left\{ \frac{(1-q) a_{t^p}}{q b_{t^p}}\right\}^q$ if  $t_0 := \frac{q b_{t^p}}{(1-q)a_{t^p}}$ satisfies $m \leq t_0 \leq M$. 

Thus we have $\alpha =K(m,M,p,q)$ by simple calculations with ${{a}_{{{t}^{p}}}}=\frac{{{M}^{p}}-{{m}^{p}}}{M-m}$, ${{b}_{{{t}^{p}}}}=\frac{M{{m}^{p}}-m{{M}^{p}}}{M-m}$ and the other cases are trivial. Thus we have the inequalities (\ref{15}) and (\ref{2_8}).
\end{proof}
Observe that Corollary \ref{cor2.3} gives a refinement of \cite[Corollary 2.6]{4}.
In addition, for example we take $p=q=-1$, then $\alpha = K(m,M,-1,-1) = \frac{(M+m)^2}{4Mm}$ which is the original Kantorovich constant. Then we also have $\beta = \frac{M+m}{Mm}-2\sqrt{\frac{\alpha}{Mm}}$ from \eqref{be}. Inserting $\alpha =\frac{(M+m)^2}{4Mm}$ to the above, we can confirm $\beta = 0$ easily.

\medskip

The last result in this section, which is a refinement of \cite[Corollary 2.2]{kim} (see also \cite[Corollary 1]{5}) can be stated as follows.
\begin{corollary}\label{theorem_2_3}
	Let $A,B\in \mathbb{B}\left( \mathcal{H} \right)$ be two strictly positive operators such that $A\le B$ and $m{{\mathbf{1}}_{\mathcal{H}}}\le B\le M{{\mathbf{1}}_{\mathcal{H}}}$ for some scalars $0<m<M$. Then
	\begin{equation}\label{10}
	{{B}^{p}}\le { \exp \left( \frac{M{{\mathbf{1}}_{\mathcal{H}}}-B}{M-m}\ln m^p+\frac{B-m{{\mathbf{1}}_{\mathcal{H}}}}{M-m}\ln M^p \right) }\le C\left( m,M,p,q \right){{\mathbf{1}}_{\mathcal{H}}}+{{A}^{q}}\quad\text{ for }p,q\leq 0,
	\end{equation}
	where $C\left( m,M,p,q \right)$ is the Kantorovich constant for the difference with two parameters and defined by
	\[C\left( m,M,p,q \right)=\left\{ \begin{array}{ll}
	\frac{M{{m}^{p}}-m{{M}^{p}}}{M-m}+\left( q-1 \right){{\left( \frac{{{M}^{p}}-{{m}^{p}}}{q\left( M-m \right)} \right)}^{\frac{q}{q-1}}}&\text{ if }m\le {{\left( \frac{{{M}^{p}}-{{m}^{p}}}{q\left( M-m \right)} \right)}^{\frac{1}{q-1}}}\le M \\ 
	\max \left\{ {{M}^{p}}-{{M}^{q}},{{m}^{p}}-{{m}^{q}} \right\}&\text{ otherwise} \\ 
	\end{array} \right..\]
\end{corollary}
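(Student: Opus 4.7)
The plan is to deduce Corollary \ref{theorem_2_3} as the special case of Corollary \ref{8} (equivalently, of Theorem \ref{a}) corresponding to the choices $f(t)=t^{p}$, $g(t)=t^{q}$ with $p,q\le 0$, and the particular scaling $\alpha = 1$. The function $f(t)=t^{p}$ is log-convex on $(0,\infty)$ for $p\le 0$ (as recorded in the introduction), and $g(t)=t^{q}$ is positive, decreasing and convex on $(0,\infty)$ for $q\le 0$, so the hypotheses of case (i) of Theorem \ref{a} are met. With $\alpha=1>0$, the conclusion of Theorem \ref{a} immediately gives the first inequality and the middle exponential expression in \eqref{10}, and reduces the right-hand side to $A^{q}+\beta\mathbf{1}_{\mathcal H}$, so the task reduces to identifying $\beta$ from \eqref{12} with the stated constant $C(m,M,p,q)$.

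For the chosen $f$ and $g$, $a_{t^{p}}=\frac{M^{p}-m^{p}}{M-m}$ and $b_{t^{p}}=\frac{Mm^{p}-mM^{p}}{M-m}$, so I would set
\[
h(t):=a_{t^{p}}\,t+b_{t^{p}}-t^{q},\qquad t\in[m,M],
\]
and compute $\beta=\max_{m\le t\le M}h(t)$. Since $t\mapsto t^{q}$ is convex on $(0,\infty)$ for $q\le 0$, the function $h$ is concave (the sum of an affine and a concave function), so the maximum is attained either at a unique interior critical point or at an endpoint of $[m,M]$. Differentiating gives $h'(t)=a_{t^{p}}-q\,t^{q-1}$, and solving $h'(t_{0})=0$ yields
\[
t_{0}=\left(\frac{M^{p}-m^{p}}{q(M-m)}\right)^{\!1/(q-1)},
\]
which is exactly the threshold distinguishing the two cases in the definition of $C(m,M,p,q)$.

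When $t_{0}\in[m,M]$, I would substitute the relation $t_{0}^{\,q}=(a_{t^{p}}/q)\,t_{0}$ (obtained from $h'(t_{0})=0$) into $h(t_{0})$ to get
\[
h(t_{0})=\frac{q-1}{q}\,a_{t^{p}}\,t_{0}+b_{t^{p}}=(q-1)\left(\frac{M^{p}-m^{p}}{q(M-m)}\right)^{\!q/(q-1)}+\frac{Mm^{p}-mM^{p}}{M-m},
\]
which matches the first branch of $C(m,M,p,q)$. Otherwise the concavity of $h$ forces the maximum to lie at an endpoint, and a direct evaluation gives $h(m)=m^{p}-m^{q}$ and $h(M)=M^{p}-M^{q}$, recovering the second branch.

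I do not expect any real conceptual obstacle: once one recognises that \eqref{10} is the $\alpha=1$ instance of \eqref{18}, the entire content is the bookkeeping that identifies the maximum value $\beta$ with $C(m,M,p,q)$. The only mildly delicate point is checking signs when manipulating fractional powers, since both $M^{p}-m^{p}$ and $q(M-m)$ are non-positive for $p,q\le 0$, so their ratio is non-negative and the $(q-1)$-th root in the definition of $t_{0}$ is well-defined; this handling of signs is the step most worth verifying carefully.
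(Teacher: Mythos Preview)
Your proposal is correct and follows essentially the same route as the paper: apply Theorem~\ref{a} with $f(t)=t^{p}$, $g(t)=t^{q}$ ($p,q\le 0$) and $\alpha=1$, then identify $\beta=\max_{m\le t\le M}\{a_{t^{p}}t+b_{t^{p}}-t^{q}\}$ with $C(m,M,p,q)$ by locating the critical point $t_{0}=(a_{t^{p}}/q)^{1/(q-1)}$. If anything you are slightly more explicit than the paper, which dispatches the optimisation with ``by simple calculations'' and ``the other cases are trivial,'' whereas you spell out the concavity of $h$ and the endpoint evaluations.
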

\begin{proof}
If we put $\alpha =1$, $f\left( t \right)={{t}^{p}}$ for $p \leq 0$ and $g(t) =t^q$ for $q \leq 0$ in Theorem \ref{a}, then we have
$\beta =\underset{m\le t\le M}{\mathop{\max }}\,\left\{ {{a}_{{{t}^{p}}}}t+{{b}_{{{t}^{p}}}}-{{t}^{q}} \right\}$. By simple calculations, we have $\beta = (q-1)\left(\frac{a_{t^p}}{q}\right)^{\frac{q}{q-1}} +b_{t^p}$ if $t_0 := \left( \frac{a_{t^p}}{q}\right)^{\frac{1}{q-1}}$  satisfies $m \leq t_0 \leq M$. The other cases are trivial. Thus we have the desired conclusion, since ${{a}_{{{t}^{p}}}}=\frac{{{M}^{p}}-{{m}^{p}}}{M-m}$ and ${{b}_{{{t}^{p}}}}=\frac{M{{m}^{p}}-m{{M}^{p}}}{M-m}$.
\end{proof}


\section{\bf Applications to chaotic order}\label{s3}
\vskip 0.4 true cm
In this section, we show some inequalities on chaotic order (i.e., $\log A\le \log B$ for $A,B>0$). To achieve our next results, we need the following lemma. Its proof is standard but we provide a proof for the sake of completeness.
\begin{lemma} \label{lemma2_1}
Let $A,B\in \mathbb{B}\left( \mathcal{H} \right)$ be two strictly positive operators.
Then the following statements are equivalent:
\begin{itemize}
\item[(i)] $\log A \leq \log B$.
\item[(ii)] $B^{r} \leq \left( B^{\frac{r}{2}} A^p B^{\frac{r}{2}}\right)^{\frac{r}{p+r}}$\quad {\rm for} $p\leq 0$  {\rm and}  $r\le 0$.
\end{itemize}
\end{lemma}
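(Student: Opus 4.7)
The plan is to reduce the negative-parameter statement (ii) to the classical chaotic Furuta inequality of Fujii--Furuta--Kamei via the order-reversing substitution $p \mapsto -p$, $r \mapsto -r$ combined with operator inversion.

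Concretely, for $p, r \le 0$ I set $p' := -p \ge 0$ and $r' := -r \ge 0$. A straightforward computation gives $r/(p+r) = r'/(p'+r')$, and $\bigl(B^{-r'/2} A^{-p'} B^{-r'/2}\bigr)^{-1} = B^{r'/2} A^{p'} B^{r'/2}$. Since the map $X \mapsto X^{-1}$ reverses order on strictly positive operators, taking inverses on both sides of (ii) converts it into the equivalent assertion
\[
\bigl(B^{r'/2} A^{p'} B^{r'/2}\bigr)^{r'/(p'+r')} \le B^{r'} \quad \text{for all } p', r' \ge 0.
\]

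The last step is to invoke the classical chaotic Furuta inequality of Fujii, Furuta and Kamei, which says precisely that $\log A \le \log B$ is equivalent to the display above holding for every $p', r' \ge 0$. Composing these two equivalences yields (i) $\Leftrightarrow$ (ii).

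The analytic substance of the lemma is entirely packaged into the invoked chaotic Furuta inequality, whose standard derivation goes through the Furuta inequality together with the identity $\lim_{s \to 0^+} (X^s - I)/s = \log X$. The main obstacle here is purely bookkeeping: one must verify that the exponent $r/(p+r)$ is unchanged under the sign flip (so it continues to lie in $[0,1]$ and the power is well-behaved), and that the single inversion step correctly flips the direction of the operator inequality while converting $B^{-r'/2} A^{-p'} B^{-r'/2}$ into $B^{r'/2} A^{p'} B^{r'/2}$.
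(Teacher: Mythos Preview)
Your proposal is correct and follows essentially the same route as the paper: both reduce statement (ii) to the classical chaotic Furuta inequality with nonnegative parameters by means of the substitution $p\mapsto -p$, $r\mapsto -r$ together with an operator-inversion step that reverses the inequality. The only cosmetic difference is the order of operations---the paper first passes from $\log A\le\log B$ to $\log B^{-1}\le\log A^{-1}$ and then applies chaotic Furuta, whereas you first rewrite (ii) via inversion and then match it against chaotic Furuta---but the logical content is identical.
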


\begin{proof}
From the well-known \lq\lq chaotic Furuta inequality'' (see, e.g., \cite{FFK1993,FJK1997,F1992}) the order $\log A \geq \log B$ is equivalent to the inequality $\left(B^{\frac{r}{2}}A^pB^{\frac{r}{2}}\right)^{\frac{p}{p+r}} \geq B^r$ for $p,r\ge 0$ and $A,B >0$.  The assertion (i)  is equivalent to the order  $\log B^{-1} \leq \log A^{-1}$. By the use of chaotic Furuta inequality,  the order  $\log B^{-1} \leq \log A^{-1}$  is equivalent to the inequality 
\begin{equation}  \label{proof_lemma2_1_ineq01}
{{B}^{-r}}\le {{\left( {{B}^{\frac{-r}{2}}}{{A}^{-p}}{{B}^{\frac{-r}{2}}} \right)}^{\frac{r}{p+r}}}\quad\text{ for }p,r\ge 0.
\end{equation}
This is  equivalent to the inequality 
$$
{{B}^{r'}}\le {{\left( {{B}^{\frac{r'}{2}}}{{A}^{p'}}{{B}^{\frac{r'}{2}}} \right)}^{\frac{r'}{p'+r'}}}\quad\text{ for }p',r'\le 0,$$
by substituting $p' = -p$ and $r'=-r$ in (\ref{proof_lemma2_1_ineq01}).
We thus obtain the desired conclusion.
\end{proof}

\medskip

As an application of Corollary \ref{cor2.3}, we have the following result:
\begin{corollary}
Let $A,B\in \mathbb{B}\left( \mathcal{H} \right)$ be two strictly positive  operators such that $m{{\mathbf{1}}_{\mathcal{H}}}\le B\le M{{\mathbf{1}}_{\mathcal{H}}}$ for some scalars $0<m<M$ and $\log A \leq \log B$. Then for $p \leq 0$ and $r \leq -1$, 
\[{{B}^{p}}\le B^{-r} \exp\left( \frac{M{{\mathbf{1}}_{\mathcal{H}}}-B}{M-m}\ln {{m}^{p+r}}+\frac{B-m{{\mathbf{1}}_{\mathcal{H}}}}{M-m}\ln {{M}^{p+r}} \right) \leq {{K}}\left( m,M,p+r \right){{A}^{p}}.\]
\end{corollary}
\begin{proof}
The idea of proof is similar to the one in \cite[Theorem 1]{YY1999}. Thanks to Lemma \ref{lemma2_1}, the chaotic order $\log A \leq \log B$ is equivalent to
 $B^{r} \leq \left( B^{\frac{r}{2}} A^p B^{\frac{r}{2}}\right)^{\frac{r}{p+r}}$ for $p,r\le 0$.
Putting $B_1 = B$ and $A_1=\left( B^{\frac{r}{2}}A^{p}B^{\frac{r}{2}}\right)^{\frac{1}{p+r}}$ in the above, then $0 < A_1 \leq B_1$ and $m{{\mathbf{1}}_{\mathcal{H}}}\le {{B}_{1}}\le M{{\mathbf{1}}_{\mathcal{H}}}$, since $0\le -\dfrac{1}{r} \le 1$ from the assumption $r \le -1$.
Thus we have for $p_1 \leq 0$
\[\begin{aligned}
{{B}^{{{p}_{1}}}}&=B_{1}^{{{p}_{1}}} \\ 
& \le \exp \left( \frac{M{{\mathbf{1}}_{\mathcal{H}}}-B_1}{M-m}\ln {{m}^{{{p}_{1}}}}+\frac{B_1-m{{\mathbf{1}}_{\mathcal{H}}}}{M-m}\ln {{M}^{{{p}_{1}}}} \right) \\ 
& \le {{K}}\left( m,M,{{p}_{1}} \right)A_{1}^{{{p}_{1}}} \\ 
& ={{K}}\left( m,M,{{p}_{1}} \right){{\left( {{B}^{\frac{r}{2}}}{{A}^{p}}{{B}^{\frac{r}{2}}} \right)}^{\frac{{{p}_{1}}}{p+r}}},  
\end{aligned}\]
by (\ref{2_8}). By setting $p_1 = p+r \leq 0$ and multiplying $B^{-\frac{r}{2}}$ to both sides, we obtain the desired conclusion.
\end{proof}

\medskip

In a similar fashion, one can prove the following result:
\begin{corollary}
Let $A,B\in \mathbb{B}\left( \mathcal{H} \right)$ be two strictly positive  operators such that $m{{\mathbf{1}}_{\mathcal{H}}}\le B\le M{{\mathbf{1}}_{\mathcal{H}}}$ for some scalars $0<m<M$ and $\log A \leq \log B$. Then for $p \leq 0$ and $r \leq -1$, 
\[{{B}^{p}}\le {{B}^{-r}}\exp \left( \frac{M{{\mathbf{1}}_{\mathcal{H}}}-B}{M-m}\ln {{m}^{p+r}}+\frac{B-m{{\mathbf{1}}_{\mathcal{H}}}}{M-m}\ln {{M}^{p+r}} \right)\le C\left( m,M,p+r \right)  B^{-r} 
+{{A}^{p}},\]
where
\begin{equation}\label{cm}
C\left( m,M,p \right)=\left\{ \begin{array}{ll}
\frac{M{{m}^{p}}-m{{M}^{p}}}{M-m}+\left( p-1 \right){{\left( \frac{{{M}^{p}}-{{m}^{p}}}{p\left( M-m \right)} \right)}^{\frac{p}{p-1}}}&\text{ if }m\le {{\left( \frac{{{M}^{p}}-{{m}^{p}}}{p\left( M-m \right)} \right)}^{\frac{1}{p-1}}}\le M \\ 
0&\text{ otherwise} \\ 
\end{array} \right..
\end{equation}
\end{corollary}
\begin{proof}
If we set $p=q$ in Corollary \ref{theorem_2_3}, we have the following inequalities for $p \leq 0$
\begin{equation} \label{ineq01_proof_corollary_2_5}
{{B}^{p}}\le \exp \left( \frac{M{{\mathbf{1}}_{\mathcal{H}}}-B}{M-m}\ln {{m}^{p}}+\frac{B-m{{\mathbf{1}}_{\mathcal{H}}}}{M-m}\ln {{M}^{p}} \right)\le C\left( m,M,p \right){{\mathbf{1}}_{\mathcal{H}}}+{{A}^{p}}.
\end{equation}

Thanks to Lemma \ref{lemma2_1}, the chaotic order $\log A \leq \log B$ is equivalent to
 $B^{r} \leq \left( B^{\frac{r}{2}} A^p B^{\frac{r}{2}}\right)^{\frac{r}{p+r}}$ for $p,r\le 0$.
Putting $B_1 = B$ and $A_1=\left( B^{\frac{r}{2}}A^{p}B^{\frac{r}{2}}\right)^{\frac{1}{p+r}}$ in the above, then $0 < A_1 \leq B_1$ and $m{{\mathbf{1}}_{\mathcal{H}}}\le {{B}_{1}}\le M{{\mathbf{1}}_{\mathcal{H}}}$. Thus we have for $p_1 \leq 0$
\[B_{1}^{{{p}_{1}}}\le \exp \left( \frac{M{{\mathbf{1}}_{\mathcal{H}}}-{{B}_{1}}}{M-m}\ln {{m}^{{{p}_{1}}}}+\frac{{{B}_{1}}-m{{\mathbf{1}}_{\mathcal{H}}}}{M-m}\ln {{M}^{{{p}_{1}}}} \right)\le C\left( m,M,{{p}_{1}} \right){{\mathbf{1}}_{\mathcal{H}}}+A_{1}^{{{p}_{1}}},\]
by (\ref{ineq01_proof_corollary_2_5}). Putting $p_1 = p+r \leq 0$ and multiplying $B^{-\frac{r}{2}}$ to both sides, we obtain the desired conclusion.
\end{proof}

\section{\bf Miscellanea}\label{s4}
\vskip 0.4 true cm
By the similar way presented in the previous sections, it is also possible to improve the results which previously obtained by employing the Mond-Pe\v cari\'c method. 

As a multiple operator version of the celebrated \lq\lq Davis-Choi-Jensen inequality'' \cite{choi}, Mond and Pe\v cari\'c in \cite[Theorem 1]{mond} proved the inequality
\begin{equation}\label{21}
f\left( \sum\limits_{i=1}^{n}{{{w}_{i}}{{\Phi }_{i}}\left( {{A}_{i}} \right)} \right)\le \sum\limits_{i=1}^{n}{{{w}_{i}}{{\Phi }_{i}}\left( f\left( {{A}_{i}} \right) \right)},
\end{equation}
for operator convex function $f$ defined on an interval $I$, where ${{\Phi }_{i}}$ ($i=1,\ldots ,n$) are normalized positive linear mappings from $\mathbb{B}\left( \mathcal{H} \right)$ to $\mathbb{B}\left( \mathcal{K} \right)$, ${{A}_{1}},\ldots ,{{A}_{n}}$ are self-adjoint operators with spectra in $I$ and ${{w}_{1}},\ldots ,{{w}_{n}}$ are non-negative real numbers with $\sum\nolimits_{i=1}^{n}{{{w}_{i}}}=1$.

\medskip

In a reverse direction to that of inequality \eqref{21} we have the following:
\begin{theorem}\label{4.1}
	Let ${{\Phi }_{i}}$ be normalized positive linear maps from $\mathbb{B}\left( \mathcal{H} \right)$ to $\mathbb{B}\left( \mathcal{K} \right)$, ${{A}_{i}}\in \mathbb{B}\left( \mathcal{H} \right)$ be self-adjoint operators with $m{{\mathbf{1}}_{\mathcal{H}}}\le {{A}_{i}}\le M{{\mathbf{1}}_{\mathcal{H}}}$ for some scalars $m<M$ and ${{w}_{i}}$ be positive numbers such that $\sum\nolimits_{i=1}^{n}{{{w}_{i}}}=1$. If $f$ is a log-convex function and $g$ is a continuous function on $\left[ m,M \right]$, then for a given $\alpha \in \mathbb{R}$   
	\begin{equation}\label{19}
	\begin{aligned}
	\sum\limits_{i=1}^{n}{{{w}_{i}}{{\Phi }_{i}}\left( f\left( {{A}_{i}} \right) \right)}&\le \sum\limits_{i=1}^{n}{{{w}_{i}}{{\Phi }_{i}}\left( \exp \left( \frac{M{{\mathbf{1}}_{\mathcal{H}}}-{{A}_{i}}}{M-m}\ln f\left( m \right)+\frac{{{A}_{i}}-m{{\mathbf{1}}_{\mathcal{H}}}}{M-m}\ln f\left( M \right) \right) \right)} \\ 
	& \le \alpha g\left( \sum\limits_{i=1}^{n}{{{w}_{i}}{{\Phi }_{i}}\left( {{A}_{i}} \right)} \right)+\beta {{\mathbf{1}}_{\mathcal{K}}}, 
	\end{aligned}
	\end{equation}   
	holds with $\beta$ as in \eqref{12}.
\end{theorem}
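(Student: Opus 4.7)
The plan is to lift the scalar two-stage estimate \eqref{5} used in the proof of Theorem \ref{a} to the setting of weighted sums of positive linear maps, and then close with the scalar definition of $\beta$ applied via functional calculus to the single operator $C:=\sum_{i=1}^n w_i\Phi_i(A_i)$.

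First, I would record the scalar inequality from log-convexity exactly as in \eqref{5}: for every $t\in[m,M]$,
\[
f(t)\le [f(m)]^{\frac{M-t}{M-m}}[f(M)]^{\frac{t-m}{M-m}}\le a_f t+b_f.
\]
Since $m\mathbf{1}_{\mathcal{H}}\le A_i\le M\mathbf{1}_{\mathcal{H}}$ and each $\Phi_i$ is a positive (hence order-preserving) linear map, applying the standard functional calculus to each $A_i$, then $\Phi_i$, and finally weighting by $w_i$ and summing yields
\[
\sum_{i=1}^n w_i\Phi_i(f(A_i))\le \sum_{i=1}^n w_i\Phi_i\!\left(\exp\!\left(\tfrac{M\mathbf{1}_{\mathcal{H}}-A_i}{M-m}\ln f(m)+\tfrac{A_i-m\mathbf{1}_{\mathcal{H}}}{M-m}\ln f(M)\right)\right)\le a_f C+b_f\mathbf{1}_{\mathcal{K}},
\]
where I have used $\Phi_i(\mathbf{1}_{\mathcal{H}})=\mathbf{1}_{\mathcal{K}}$ together with $\sum_i w_i=1$ to handle the constant term. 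This already gives the first inequality of \eqref{19} and reduces the remaining task to bounding $a_f C+b_f\mathbf{1}_{\mathcal{K}}$ above by $\alpha g(C)+\beta\mathbf{1}_{\mathcal{K}}$.

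Second, I would verify that $\mathrm{Sp}(C)\subseteq [m,M]$. From $m\mathbf{1}_{\mathcal{H}}\le A_i\le M\mathbf{1}_{\mathcal{H}}$ and positivity/normalization of $\Phi_i$, each $\Phi_i(A_i)$ lies between $m\mathbf{1}_{\mathcal{K}}$ and $M\mathbf{1}_{\mathcal{K}}$, and taking the convex combination with the $w_i$'s preserves these bounds. Consequently functional calculus is available on $C$ for any continuous function defined on $[m,M]$, in particular for $g$.

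Third, by the very definition \eqref{12} of $\beta$, the scalar inequality
\[
a_f t+b_f-\alpha g(t)\le \beta\qquad (m\le t\le M)
\]
holds, and applying functional calculus to the self-adjoint operator $C$ (whose spectrum lies in $[m,M]$) gives $a_f C+b_f\mathbf{1}_{\mathcal{K}}\le \alpha g(C)+\beta\mathbf{1}_{\mathcal{K}}$. Chaining this with the estimate from Step 1 delivers \eqref{19}. The chief subtlety — which is mild — is simply that, in contrast to Theorem \ref{a}, no monotonicity or convexity assumption on $g$ is used and no relation $A\le B$ is invoked: the presence of only one self-adjoint operator $C$ on the right-hand side lets us absorb everything into a single functional calculus step governed directly by $\beta$.
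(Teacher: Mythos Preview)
Your proposal is correct and follows essentially the same approach as the paper: lift the scalar two-stage estimate \eqref{5} through the $\Phi_i$'s and the convex combination, then use the definition of $\beta$ via functional calculus on $C=\sum_i w_i\Phi_i(A_i)$ after checking $m\mathbf{1}_{\mathcal{K}}\le C\le M\mathbf{1}_{\mathcal{K}}$. Your closing remark that no monotonicity or convexity of $g$ is needed here (in contrast to Theorem~\ref{a}) is a correct and worthwhile observation.
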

\begin{proof}
	Thanks to \eqref{5}, we get
	\[f\left( {{A}_{i}} \right)\le \exp \left( \frac{M{{\mathbf{1}}_{\mathcal{H}}}-{{A}_{i}}}{M-m}\ln f\left( m \right)+\frac{{{A}_{i}}-m{{\mathbf{1}}_{\mathcal{H}}}}{M-m}\ln f\left( M \right) \right)\le {{a}_{f}}{{A}_{i}}+{{b}_{f}}{{\mathbf{1}}_{\mathcal{H}}}.\]
	The hypotheses on ${{\Phi }_{i}}$ and ${{w}_{i}}$ ensure the following: 
	\[\begin{aligned}
	\sum\limits_{i=1}^{n}{{{w}_{i}}{{\Phi }_{i}}\left( f\left( {{A}_{i}} \right) \right)}&\le \sum\limits_{i=1}^{n}{{{w}_{i}}{{\Phi }_{i}}\left( \exp \left( \frac{M{{\mathbf{1}}_{\mathcal{H}}}-{{A}_{i}}}{M-m}\ln f\left( m \right)+\frac{{{A}_{i}}-m{{\mathbf{1}}_{\mathcal{H}}}}{M-m}\ln f\left( M \right) \right) \right)} \\ 
	& \le {{a}_{f}}\sum\limits_{i=1}^{n}{{{w}_{i}}{{\Phi }_{i}}\left( {{A}_{i}} \right)}+{{b}_{f}}{{\mathbf{1}}_{\mathcal{K}}}.  
	\end{aligned}\]
	Using the fact that $m{{\mathbf{1}}_{\mathcal{K}}}\le \sum\limits_{i=1}^{n}{{{w}_{i}}{{\Phi }_{i}}\left( {{A}_{i}} \right)}\le M{{\mathbf{1}}_{\mathcal{K}}}$, we can write
	\[\begin{aligned}
	& \sum\limits_{i=1}^{n}{{{w}_{i}}{{\Phi }_{i}}\left( f\left( {{A}_{i}} \right) \right)-\alpha g\left( \sum\limits_{i=1}^{n}{{{w}_{i}}{{\Phi }_{i}}\left( {{A}_{i}} \right)} \right)} \\ 
	& \quad\le \sum\limits_{i=1}^{n}{{{w}_{i}}{{\Phi }_{i}}\left( \exp \left( \frac{M{{\mathbf{1}}_{\mathcal{H}}}-{{A}_{i}}}{M-m}\ln f\left( m \right)+\frac{{{A}_{i}}-m{{\mathbf{1}}_{\mathcal{H}}}}{M-m}\ln f\left( M \right) \right) \right)}-\alpha g\left( \sum\limits_{i=1}^{n}{{{w}_{i}}{{\Phi }_{i}}\left( {{A}_{i}} \right)} \right) \\ 
	&\quad \le {{a}_{f}}\sum\limits_{i=1}^{n}{{{w}_{i}}{{\Phi }_{i}}\left( {{A}_{i}} \right)}+{{b}_{f}}{{\mathbf{1}}_{\mathcal{K}}}-\alpha g\left( \sum\limits_{i=1}^{n}{{{w}_{i}}{{\Phi }_{i}}\left( {{A}_{i}} \right)} \right) \\ 
	&\quad \le \underset{m\le t\le M}{\mathop{\max }}\,\left\{ {{a}_{f}}t+{{b}_{f}}-\alpha g\left( t \right) \right\}{{\mathbf{1}}_{\mathcal{K}}}, 
	\end{aligned}\]
	which is, after rearrangement, equivalent to \eqref{19}. So the proof is complete. 
\end{proof}
It is worth mentioning that, Theorem \ref{4.1} is stronger than what appears in \cite[Theorem 2.2]{05}.

\medskip

Let $I\subseteq \mathbb{R}$ and $f:I\to \mathbb{R}$ be a continuous function and let $A,B\in \mathbb{B}\left( \mathcal{H} \right)$ be two strictly positive  operators such that $Sp\left( {{A}^{-\frac{1}{2}}}B{{A}^{-\frac{1}{2}}} \right)\subseteq I$. Then the operator ${{\sigma }_{f}}$ given by
\[A{{\sigma }_{f}}B={{A}^{\frac{1}{2}}}f\left( {{A}^{-\frac{1}{2}}}B{{A}^{-\frac{1}{2}}} \right){{A}^{\frac{1}{2}}},\] 
is called {\it $f$-connection} (cf. \cite{kubo}). We shall show the following result involving $f$-connection of strictly positive operators.
\begin{theorem}\label{04.1}
	Let $\Phi $ be a normalized positive linear map from $\mathbb{B}\left( \mathcal{H} \right)$ to $\mathbb{B}\left( \mathcal{K} \right)$ and $A,B\in \mathbb{B}\left( \mathcal{H} \right)$ be two strictly positive operators such that $mA\le B\le MA$ for some scalars $0<m<M$. If $f$ is a log-convex function on $\left[ m,M \right]$, then for a given $\alpha \in \mathbb{R}$   
	\begin{equation}\label{02}
\begin{aligned}
\Phi \left( A{{\sigma }_{f}}B \right)&\le \Phi \left( {{A}^{\frac{1}{2}}}\exp \left( \frac{M{{\mathbf{1}}_{\mathcal{H}}}-{{A}^{-\frac{1}{2}}}B{{A}^{-\frac{1}{2}}}}{M-m}\ln f\left( m \right)+\frac{{{A}^{-\frac{1}{2}}}B{{A}^{-\frac{1}{2}}}-m{{\mathbf{1}}_{\mathcal{H}}}}{M-m}\ln f\left( M \right) \right){{A}^{\frac{1}{2}}} \right) \\ 
& \le \beta \Phi \left( A \right)+\alpha \left( \Phi \left( A \right){{\sigma }_{f}}\Phi \left( B \right) \right),  
\end{aligned}
\end{equation}
holds with $\beta$ as in \eqref{12}.
\end{theorem}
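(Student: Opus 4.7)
The plan is to mirror the argument of Theorem~\ref{a} but transplant it onto the operator $C:=A^{-1/2}BA^{-1/2}$ rather than on $B$ directly, so that the $f$-connection structure $A^{1/2}f(C)A^{1/2}=A\sigma_f B$ appears naturally. The hypothesis $mA\le B\le MA$ is equivalent to $m\mathbf{1}_{\mathcal{H}}\le C\le M\mathbf{1}_{\mathcal{H}}$, so $\operatorname{Sp}(C)\subseteq[m,M]$ and the scalar double inequality \eqref{5} can be applied to $C$ through the standard functional calculus.

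First I would establish, at the level of $C$, the pointwise chain
\[
f(C)\le \exp\!\left(\tfrac{M\mathbf{1}_{\mathcal{H}}-C}{M-m}\ln f(m)+\tfrac{C-m\mathbf{1}_{\mathcal{H}}}{M-m}\ln f(M)\right)\le a_f C+b_f\mathbf{1}_{\mathcal{H}},
\]
sandwich every term by $A^{1/2}$, and invoke $A^{1/2}CA^{1/2}=B$ to arrive at
\[
A\sigma_f B\le A^{1/2}\exp(\cdots)A^{1/2}\le a_f B+b_f A.
\]
Since $\Phi$ is positive and linear, applying it preserves this chain and yields the first inequality of \eqref{02} together with the affine majorant $a_f\Phi(B)+b_f\Phi(A)$.

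The second inequality is where the $f$-connection of $\Phi(A)$ and $\Phi(B)$ must be manufactured. I would set $D:=\Phi(A)^{-1/2}\Phi(B)\Phi(A)^{-1/2}$ and observe that $m\Phi(A)\le\Phi(B)\le M\Phi(A)$ (obtained by applying $\Phi$ to $mA\le B\le MA$) gives $m\mathbf{1}_{\mathcal{K}}\le D\le M\mathbf{1}_{\mathcal{K}}$. The scalar inequality $a_f t+b_f-\alpha f(t)\le \beta$ on $[m,M]$, with $\beta$ as in \eqref{12} and $g$ taken to be $f$, transports via functional calculus to $a_f D+b_f\mathbf{1}_{\mathcal{K}}-\alpha f(D)\le\beta\mathbf{1}_{\mathcal{K}}$; sandwiching this by $\Phi(A)^{1/2}$ produces
\[
a_f\Phi(B)+b_f\Phi(A)-\alpha\,\Phi(A)\sigma_f\Phi(B)\le\beta\Phi(A),
\]
which, combined with the previous step, is the remaining inequality of \eqref{02}.

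The main obstacle is the choice of the correct \emph{pivot} on the right-hand side: it is not $\Phi(B)$ that must be fed into $f$, but the normalised operator $D$ above, because only then does the $\Phi(A)^{1/2}$-sandwich reconstitute $\Phi(A)\sigma_f\Phi(B)$ in its Kubo--Ando form. Once this reduction is in place, the result follows from the log-convex inequality \eqref{5} applied twice (to $C$ on the left, and through the scalar bound defining $\beta$ on the right), and no sign restriction on $\alpha$ or monotonicity assumption on $g$ is required because here $g=f$ is only accessed through the scalar maximum in \eqref{12}.
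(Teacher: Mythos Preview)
Your argument is correct and is essentially the paper's proof unpacked: the paper obtains the same chain by applying Theorem~\ref{4.1} (with $n=1$, $g=f$, $A_1=A^{-1/2}BA^{-1/2}$) to the single normalized positive linear map $\Psi(X):=\Phi(A)^{-1/2}\Phi(A^{1/2}XA^{1/2})\Phi(A)^{-1/2}$, which packages your two sandwiching steps and the passage through the affine pivot $a_f\Phi(B)+b_f\Phi(A)$ into one stroke. Conceptually the two arguments coincide; the paper's version is just the compressed form.
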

\begin{proof}
We give a lengthy sketch but routine calculations.	It follows from Theorem \ref{4.1} that
\begin{equation}\label{01}
\begin{aligned}
\Psi \left( f\left( {{A}^{-\frac{1}{2}}}B{{A}^{-\frac{1}{2}}} \right) \right)&\le \Psi \left( \exp \left( \frac{M{{\mathbf{1}}_{\mathcal{H}}}-{{A}^{-\frac{1}{2}}}B{{A}^{-\frac{1}{2}}}}{M-m}\ln f\left( m \right)+\frac{{{A}^{-\frac{1}{2}}}B{{A}^{-\frac{1}{2}}}-m{{\mathbf{1}}_{\mathcal{H}}}}{M-m}\ln f\left( M \right) \right) \right) \\ 
& \le \beta {{\mathbf{1}}_{\mathcal{K}}}+\alpha f\left( \Psi \left( {{A}^{-\frac{1}{2}}}B{{A}^{-\frac{1}{2}}} \right) \right),  
\end{aligned}
\end{equation}
where $\Psi $ is a normalized positive linear map from $\mathbb{B}\left( \mathcal{H} \right)$ to $\mathbb{B}\left( \mathcal{K} \right)$.

By taking $\Psi \left( X \right):=\Phi {{\left( A \right)}^{-\frac{1}{2}}}\Phi \left( {{A}^{\frac{1}{2}}}X{{A}^{\frac{1}{2}}} \right)\Phi {{\left( A \right)}^{-\frac{1}{2}}}$, where $\Phi $ is an arbitrary normalized positive linear map in \eqref{01}, we obtain the desired result \eqref{02}.
\end{proof}

\medskip

In the sequel, we use the notation $A{{\natural}_{v}}B={{A}^{\frac{1}{2}}}{{\left( {{A}^{-\frac{1}{2}}}B{{A}^{-\frac{1}{2}}} \right)}^{v}}{{A}^{\frac{1}{2}}}$, $\left( v\in \mathbb{R} \right)$. The following corollary follows by setting $f\left( t \right)={{t}^{p}}$ $\left( p\le 0 \right)$ in the previous theorem.
\begin{corollary}\label{004.1}
	Let $\Phi $ be a normalized positive linear map from $\mathbb{B}\left( \mathcal{H} \right)$ to $\mathbb{B}\left( \mathcal{K} \right)$ and $A,B\in \mathbb{B}\left( \mathcal{H} \right)$ be two strictly positive operators such that $mA\le B\le MA$ for some scalars $0<m<M$. Then for a given $\alpha \in \mathbb{R}$, 
	\[\begin{aligned}
	\Phi \left( A{{\natural}_{p}}B \right)&\le \Phi \left( {{A}^{\frac{1}{2}}}\exp \left( \frac{M{{\mathbf{1}}_{\mathcal{H}}}-{{A}^{-\frac{1}{2}}}B{{A}^{-\frac{1}{2}}}}{M-m}\ln {{m}^{p}}+\frac{{{A}^{-\frac{1}{2}}}B{{A}^{-\frac{1}{2}}}-m{{\mathbf{1}}_{\mathcal{H}}}}{M-m}\ln {{M}^{p}} \right){{A}^{\frac{1}{2}}} \right) \\ 
	& \le \beta \Phi \left( A \right)+\alpha \left( \Phi \left( A \right){{\natural}_{p}}\Phi \left( B \right) \right),
	\end{aligned}\]
	holds for $p\le 0$, where $\beta $ is defined by \eqref{be}.
\end{corollary}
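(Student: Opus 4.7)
The strategy is to invoke Theorem~\ref{04.1} with the specific choice $f(t)=t^p$ for $p\le 0$, which the introduction identifies as a log-convex function on $(0,\infty)$ and hence on $[m,M]$. All the hypotheses of Theorem~\ref{04.1} are then in force by the standing assumption $mA\le B\le MA$, so no additional preparation on the operators is needed.

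Next I would translate the conclusion of Theorem~\ref{04.1} into the desired form by unwinding the $f$-connection. By definition $A\sigma_f B = A^{1/2} f(A^{-1/2} B A^{-1/2}) A^{1/2}$; when $f(t)=t^p$ this becomes exactly $A \sharp_p B$, and similarly $\Phi(A)\sigma_f \Phi(B) = \Phi(A) \sharp_p \Phi(B)$. Substituting into \eqref{02} therefore produces the two-sided inequality in the statement verbatim, modulo the identification of the constant $\beta$.

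Finally I would compute $\beta$ from \eqref{12} with $f(t)=g(t)=t^p$. Here $a_{t^p} = (M^p-m^p)/(M-m)$ and $b_{t^p} = (Mm^p - mM^p)/(M-m)$, so $\beta = \max_{m\le t\le M}\{a_{t^p}\, t + b_{t^p} - \alpha t^p\}$. Differentiating $h(t) = a_{t^p}\, t + b_{t^p} - \alpha t^p$ yields the unique critical point $t_0 = \left((M^p-m^p)/(\alpha p(M-m))\right)^{1/(p-1)}$; when $t_0 \in [m,M]$, substituting back produces the first branch of \eqref{be}, while otherwise the maximum is attained at an endpoint, and using the identities $a_{t^p}\, m + b_{t^p} = m^p$ and $a_{t^p}\, M + b_{t^p} = M^p$ one recovers the second branch $\max\{m^p - \alpha m^p,\, M^p - \alpha M^p\}$. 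The only step that requires any care is confirming that $t_0$ is genuinely a maximizer rather than a minimizer in the relevant sign regime --- a quick inspection of $h''(t) = -\alpha p(p-1) t^{p-2}$ settles this --- so the corollary is effectively a transparent specialization of Theorem~\ref{04.1}.
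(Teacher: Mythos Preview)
Your proposal is correct and follows the same route as the paper, which states only that the corollary ``follows by setting $f(t)=t^{p}$ ($p\le 0$) in the previous theorem.'' You add the explicit verification that $\beta$ coincides with \eqref{be}, which the paper had already carried out earlier in Corollary~\ref{8}, so no new ideas are required beyond the direct specialization.
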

Fujii and Seo \cite[Theorem 2.2]{02} showed the following operator inequality: Let $A,B\in \mathbb{B}\left( \mathcal{H} \right)$ be two positive operators and $\Phi $ be a normalized positive linear map, then 
	\begin{equation}\label{05}
	\Phi \left( A \right){{\natural}_{p}}\Phi \left( B \right)\le \Phi \left( A{{\natural}_{p}}B \right)\quad\text{ for }p\in \left[ -1,0 \right).
	\end{equation}

\medskip

The following corollary is a complementary result for \eqref{05}. The proof is immediate by using Corollary \ref{004.1}.
\begin{corollary}\label{04}
	Let $\Phi $ be a normalized positive linear map from $\mathbb{B}\left( \mathcal{H} \right)$ to $\mathbb{B}\left( \mathcal{K} \right)$ and $A,B\in \mathbb{B}\left( \mathcal{H} \right)$ be two strictly positive operators such that $mA\le B\le MA$ for some scalars $0<m<M$ and $p\le 0$. 
	\begin{itemize}
		\item[(i)] As a ratio type reverse of inequality \eqref{05} we have:
		\[\begin{aligned}
		\Phi \left( A{{\natural}_{p}}B \right)&\le \Phi \left( {{A}^{\frac{1}{2}}}\exp \left( \frac{M{{\mathbf{1}}_{\mathcal{H}}}-{{A}^{-\frac{1}{2}}}B{{A}^{-\frac{1}{2}}}}{M-m}\ln {{m}^{p}}+\frac{{{A}^{-\frac{1}{2}}}B{{A}^{-\frac{1}{2}}}-m{{\mathbf{1}}_{\mathcal{H}}}}{M-m}\ln {{M}^{p}} \right){{A}^{\frac{1}{2}}} \right) \\ 
		& \le K\left( m,M,p \right)\left( \Phi \left( A \right){{\natural}_{p}}\Phi \left( B \right) \right),  
		\end{aligned}\]
		where $K\left( m,M,p \right)$ is defined by \eqref{9}.
		\item[(ii)] As a difference type reverse of inequality \eqref{05} we have:
		\[\begin{aligned}
		\Phi \left( A{{\natural}_{p}}B \right)&\le \Phi \left( {{A}^{\frac{1}{2}}}\exp \left( \frac{M{{\mathbf{1}}_{\mathcal{H}}}-{{A}^{-\frac{1}{2}}}B{{A}^{-\frac{1}{2}}}}{M-m}\ln {{m}^{p}}+\frac{{{A}^{-\frac{1}{2}}}B{{A}^{-\frac{1}{2}}}-m{{\mathbf{1}}_{\mathcal{H}}}}{M-m}\ln {{M}^{p}} \right){{A}^{\frac{1}{2}}} \right) \\ 
		& \le C\left( m,M,p \right)\Phi \left( A \right)+\Phi \left( A \right){{\natural}_{p}}\Phi \left( B \right),
		\end{aligned}\]
		where $C\left( m,M,p \right)$ is defined by \eqref{cm}.
	\end{itemize}
\end{corollary}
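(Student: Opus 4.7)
The plan is to apply Corollary \ref{004.1} with two judicious choices of the free parameter $\alpha$, each specialization producing one of the two reverse inequalities claimed. Nothing new needs to be proved about the operator inequality itself; the entire argument collapses to a scalar bookkeeping task, namely identifying, for each value of $\alpha$, the resulting value of $\beta$ given by \eqref{be} and matching it against the constants $K(m,M,p)$ and $C(m,M,p)$ defined in \eqref{9} and \eqref{cm}.

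For part (i), the ratio-type reverse, I would set $\alpha := K(m,M,p)$ so that the additive term $\beta\Phi(A)$ in Corollary \ref{004.1} is killed, i.e., $\beta=0$. That this choice makes $\beta$ vanish is exactly the content of the remark following Corollary \ref{cor2.3}: specializing $p=q$ in Corollary \ref{8} reduces formula \eqref{17} to \eqref{be}, and the critical-point condition $m\le \bigl((M^p-m^p)/(\alpha p(M-m))\bigr)^{1/(p-1)}\le M$ together with $\alpha=K(m,M,p)$ forces the expression on the first branch of \eqref{be} to equal zero (the original Kantorovich constant $(M+m)^2/(4Mm)$ being the illustrative case $p=-1$ already verified in the text). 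Substituting this $\alpha$ into the inequality of Corollary \ref{004.1} yields the desired chain.

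For part (ii), the difference-type reverse, I would take the opposite extreme and set $\alpha:=1$. With this choice, \eqref{be} simplifies and, upon setting $p=q$ in Corollary \ref{theorem_2_3}, coincides term by term with the constant $C(m,M,p)$ appearing in \eqref{cm}: the critical point $t_0=((M^p-m^p)/(p(M-m)))^{1/(p-1)}$ is the same, the first branch evaluates to $\frac{Mm^p-mM^p}{M-m}+(p-1)\bigl(\frac{M^p-m^p}{p(M-m)}\bigr)^{p/(p-1)}$, and on the second branch the maximum reduces to $0$ (since $m^p-m^p=M^p-M^p=0$ when $\alpha=1$, matching the \textquotedblleft otherwise\textquotedblright\ branch of \eqref{cm}). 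Inserting this into Corollary \ref{004.1} gives the second chain of inequalities.

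The only potential obstacle is the routine but slightly tedious verification that the two case-distinctions in \eqref{be} transport correctly to the two case-distinctions in \eqref{9} and \eqref{cm} under the respective substitutions $\alpha=K(m,M,p)$ and $\alpha=1$; but this is a scalar calculation on $[m,M]$ that has already been carried out in the proofs of Corollaries \ref{cor2.3} and \ref{theorem_2_3}, so no further analytic work is needed. The substantive content of the present corollary is thus not in the estimates but in the recognition that Corollary \ref{004.1}, read with these two specific $\alpha$\textquoteright s, furnishes exactly the ratio-type and difference-type complements of the Fujii--Seo inequality \eqref{05}.
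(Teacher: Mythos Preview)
Your proposal is correct and takes essentially the same approach as the paper, which simply states that the result is immediate from Corollary~\ref{004.1}. You have spelled out in detail what the paper leaves implicit: specializing $\alpha=K(m,M,p)$ (forcing $\beta=0$ via the computation already done in Corollary~\ref{cor2.3}) yields part~(i), and specializing $\alpha=1$ (forcing $\beta=C(m,M,p)$ via the computation in Corollary~\ref{theorem_2_3}) yields part~(ii).
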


We close this paper by presenting a result on the inequalities for the Tsallis relative operator entropy.
The Tsallis relative operator entropy with negative parameter  was introduced in \cite{04} as
\begin{equation}\label{09}
{{T}_{p}}\left( A|B \right)=\frac{A{{\natural}_{p}}B-A}{p}\quad\text{ for }p<0.
\end{equation}
Research in this field includes obtaining new inequalities and refining existing
ones.  For example, in \cite[Theorem 3.1 ($2'$)]{02}, the following inequality has been already shown:
\begin{equation}\label{06}
\Phi \left( {{T}_{p}}\left( A|B \right) \right)\le {{T}_{p}}\left( \Phi \left( A \right)|\Phi \left( B \right) \right)\quad\text{ for }p\in \left[ -1,0 \right).
\end{equation}

\medskip

We shall give complementary inequalities to the inequality \eqref{06}, thanks to Corollary \ref{04}. 
\begin{theorem}
	Let $\Phi $ be normalized positive linear map from $\mathbb{B}\left( \mathcal{H} \right)$ to $\mathbb{B}\left( \mathcal{K} \right)$ and $A,B\in \mathbb{B}\left( \mathcal{H} \right)$ be two strictly positive operators such that $mA\le B\le MA$ for some scalars $0<m<M$. Then for $p\in \left[ -1,0 \right)$,
	\begin{equation}\label{0010}
\begin{aligned}
& \Phi \left( {{T}_{p}}\left( A|B \right) \right) \\ 
&\quad \ge \frac{1}{p}\Phi \left( {{A}^{\frac{1}{2}}}\exp \left( \frac{M{{\mathbf{1}}_{\mathcal{H}}}-{{A}^{-\frac{1}{2}}}B{{A}^{-\frac{1}{2}}}}{M-m}\ln {{m}^{p}}+\frac{{{A}^{-\frac{1}{2}}}B{{A}^{-\frac{1}{2}}}-m{{\mathbf{1}}_{\mathcal{H}}}}{M-m}\ln {{M}^{p}} \right){{A}^{\frac{1}{2}}}-A \right) \\ 
&\quad \ge {{T}_{p}}\left( \Phi \left( A \right)|\Phi \left( B \right) \right)-\left( \frac{1-K\left( m,M,p \right)}{p} \right)\left( \Phi \left( A \right){{\natural}_{p}}\Phi \left( B \right) \right),
\end{aligned}	
	\end{equation}
			where $K\left( m,M,p \right)$ is defined by \eqref{9}
and
\begin{equation}\label{0020}
\begin{aligned}
& \Phi \left( {{T}_{p}}\left( A|B \right) \right) \\ 
&\quad \ge \frac{1}{p}\Phi \left( {{A}^{\frac{1}{2}}}\exp \left( \frac{M{{\mathbf{1}}_{\mathcal{H}}}-{{A}^{-\frac{1}{2}}}B{{A}^{-\frac{1}{2}}}}{M-m}\ln {{m}^{p}}+\frac{{{A}^{-\frac{1}{2}}}B{{A}^{-\frac{1}{2}}}-m{{\mathbf{1}}_{\mathcal{H}}}}{M-m}\ln {{M}^{p}} \right){{A}^{\frac{1}{2}}}-A \right) \\ 
&\quad \ge {{T}_{p}}\left( \Phi \left( A \right)|\Phi \left( B \right) \right)+\frac{C\left( m,M,p \right)}{p}\Phi \left( A \right), 
\end{aligned}
\end{equation}
where $C\left( m,M,p \right)$ is defined by \eqref{cm}.
\end{theorem}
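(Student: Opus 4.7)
The plan is to deduce both displayed inequalities directly from Corollary \ref{04}, by subtracting $\Phi(A)$ from each term of the three-term inequalities in (i) and (ii) and then dividing by $p$. Because $p\in[-1,0)$ is negative, dividing by $p$ reverses all inequalities, which is precisely what converts the upper bounds in Corollary \ref{04} into the claimed lower bounds on $\Phi(T_p(A|B))$. Throughout I am identifying $A\natural_p B$ with $A\sharp_p B$ (as both stand for the $p$-geometric connection used in the definition \eqref{09}).

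For the first inequality \eqref{0010}, I would start from part (i) of Corollary \ref{04},
\[
\Phi(A\sharp_p B)\;\le\;\Phi\!\left(A^{1/2}\exp(\cdots)A^{1/2}\right)\;\le\;K(m,M,p)\bigl(\Phi(A)\sharp_p\Phi(B)\bigr),
\]
subtract $\Phi(A)$ from each of the three terms, and then multiply through by $1/p<0$. The leftmost term becomes $\Phi(T_p(A|B))$ by linearity of $\Phi$ and the definition \eqref{09}; the middle term becomes exactly the middle term of \eqref{0010}; and the rightmost term becomes
\[
\frac{K(m,M,p)\bigl(\Phi(A)\sharp_p\Phi(B)\bigr)-\Phi(A)}{p}
=T_p\bigl(\Phi(A)|\Phi(B)\bigr)-\frac{1-K(m,M,p)}{p}\bigl(\Phi(A)\sharp_p\Phi(B)\bigr),
\]
after adding and subtracting $\Phi(A)\sharp_p\Phi(B)$ in the numerator. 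This is precisely the right-hand side of \eqref{0010}.

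For the second inequality \eqref{0020}, I would apply the same subtract-and-divide procedure to part (ii) of Corollary \ref{04}, namely
\[
\Phi(A\sharp_p B)\;\le\;\Phi\!\left(A^{1/2}\exp(\cdots)A^{1/2}\right)\;\le\;C(m,M,p)\Phi(A)+\Phi(A)\sharp_p\Phi(B).
\]
After subtracting $\Phi(A)$ and dividing by $p<0$, the rightmost term becomes
\[
\frac{C(m,M,p)}{p}\Phi(A)+\frac{\Phi(A)\sharp_p\Phi(B)-\Phi(A)}{p}=\frac{C(m,M,p)}{p}\Phi(A)+T_p\bigl(\Phi(A)|\Phi(B)\bigr),
\]
which matches the right-hand side of \eqref{0020}.

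There is essentially no substantial obstacle: the only points requiring care are (a) the sign reversal when dividing by the negative parameter $p$, and (b) the cosmetic rearrangement in the right-hand side of \eqref{0010} that produces the factor $(1-K(m,M,p))/p$ and isolates the term $T_p(\Phi(A)|\Phi(B))$. Once these bookkeeping steps are done, both inequalities fall out immediately from Corollary \ref{04}.
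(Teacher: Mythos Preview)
Your proposal is correct and follows exactly the paper's own argument: both proofs start from Corollary \ref{04} (i) and (ii), subtract $\Phi(A)$, divide by the negative parameter $p$ to reverse the inequalities, and then perform the same algebraic rearrangement of the right-hand side to isolate $T_p(\Phi(A)|\Phi(B))$.
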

\begin{proof}
It follows from Corollary \ref{04} (i) that
\begin{equation}\label{010}
\begin{aligned}
& \Phi \left( \frac{A{{\natural}_{p}}B-A}{p} \right) \\ 
&\quad \ge \frac{1}{p}\Phi \left( {{A}^{\frac{1}{2}}}\exp \left( \frac{M{{\mathbf{1}}_{\mathcal{H}}}-{{A}^{-\frac{1}{2}}}B{{A}^{-\frac{1}{2}}}}{M-m}\ln {{m}^{p}}+\frac{{{A}^{-\frac{1}{2}}}B{{A}^{-\frac{1}{2}}}-m{{\mathbf{1}}_{\mathcal{H}}}}{M-m}\ln {{M}^{p}} \right){{A}^{\frac{1}{2}}}-A \right) \\ 
&\quad \ge \frac{K\left( m,M,p \right)\left( \Phi \left( A \right){{\natural}_{p}}\Phi \left( B \right) \right)-\Phi \left( A \right)}{p}. \\ 
\end{aligned}
\end{equation}
On account of \eqref{09} the inequalities in \eqref{010} are equivalent to \eqref{0010}. From Corollary \ref{04} (ii) we have
\[\begin{aligned}
& \Phi \left( \frac{A{{\natural}_{p}}B-A}{p} \right) \\ 
&\quad \ge \frac{1}{p}\Phi \left( {{A}^{\frac{1}{2}}}\exp \left( \frac{M{{\mathbf{1}}_{\mathcal{H}}}-{{A}^{-\frac{1}{2}}}B{{A}^{-\frac{1}{2}}}}{M-m}\ln {{m}^{p}}+\frac{{{A}^{-\frac{1}{2}}}B{{A}^{-\frac{1}{2}}}-m{{\mathbf{1}}_{\mathcal{H}}}}{M-m}\ln {{M}^{p}} \right){{A}^{\frac{1}{2}}}-A \right) \\ 
&\quad \ge \frac{C\left( m,M,p \right)\Phi \left( A \right)+\Phi \left( A \right){{\natural}_{p}}\Phi \left( B \right)-\Phi \left( A \right)}{p}, \\ 
\end{aligned}\]
which is equivalent to \eqref{0020}.
\end{proof}
\vskip 0.4 true cm
\noindent{\bf Acknowledgement.} We would like to express our hearty thanks to Prof. Jadranka Mi\'ci\'c for her kind advice.
The author (S.F.) was partially supported by JSPS KAKENHI Grant Number
16K05257. 
\bibliographystyle{alpha}

\begin{thebibliography}{9}
\bibitem{choi}
M.D. Choi, {\it A Schwarz inequality for positive linear maps on $C^{*}$-algebras}, Illinois J. Math., {\bf18} (1974), 565--574. 

\bibitem{FFK1993}
M. Fujii, T. Furuta, E. Kamei, {\it Furuta's inequality and its application to Ando's theorem}, Linear Algebra Appl., {\bf 179} (1993), 161--169. 

\bibitem{FJK1997}
M. Fujii, J.F. Jiang, E. Kamei, {\it Characterization of chaotic order and its application to Furuta inequality}, Proc. Amer. Math. Soc., {\bf 125} (1997), 3655--3658. 

\bibitem{02}
J.I. Fujii, Y. Seo, {\it Tsallis relative operator entropy with negative parameters}, Adv. Oper. Theory., {\bf1}(2) (2016), 219--235. 

\bibitem{04}
S. Furuichi, K. Yanagi, K. Kuriyama, {\it A note on operator inequalities of Tsallis relative operator entropy}, Linear Algebra Appl., {\bf407} (2005), 19--31. 

\bibitem{F1992}
T. Furuta, {\it Applications of order preserving operator inequalities}, Oper. Theory Adv. Appl., {\bf 59} (1992), 180--190. 
	
\bibitem{9}
T. Furuta, {\it Extensions of H\"older-McCarthy and Kantorovich inequalities and their applications}, Proc. Japan Acad. Ser. A., {\bf73} (1997), 38--41. 

\bibitem{2}
T. Furuta, {\it Operator inequalities associated with H\"older-McCarthy and Kantorovich inequalities}, J. Inequal. Appl., {\bf 2} (1998), 137--148. 

\bibitem{6}
T. Furuta, M. Giga, {\it A complementary result of Kantorovich type order preserving inequalities by Mi\'ci\'c-Pe\v cari\'c-Seo}, Linear Algebra Appl., {\bf369} (2003), 27--40. 

\bibitem{kim}
Y.O. Kim,  J.I. Fujii, M. Fujii, Y. Seo, {\it Kantorovich type inequalities for the difference with two negative parameters}, Sci. Math. Japon., {\bf72}(2) (2010), 171--183.

\bibitem{kubo}
F. Kubo, T. Ando, {\it Means of positive linear operators}, Math. Ann., {\bf246} (1980), 205--224. 

\bibitem{5}
J. Mi\'ci\'c, J. Pe\v cari\'c, {\it Order among power means of positive operators}, Sci. Math. Japon., {\bf61} (2005), 25--46.

\bibitem{05}
J. Mi\'ci\'c, J. Pe\v cari\'c, Y. Seo, {\it Complementary inequalities to inequalities of Jensen and Ando based on the Mond-Pe\v cari\'c method}, Linear Algebra Appl., {\bf318}(1-3) (2000), 87--107. 


\bibitem{3}
J. Mi\'ci\'c, J. Pe\v cari\'c, Y. Seo, {\it Function order of positive operators based on the Mond-Pe\v cari\'c method}, Linear Algebra Appl., {\bf360} (2003), 15--34. 

\bibitem{mond}
B. Mond, J. Pe\v cari\'c, {\it Converses of Jensen's inequality for several operators}, Rev. Anal. Num\'er. Th\'eor. Approx., {\bf23} (1994), 179--183.

\bibitem{4}
J. Pe\v cari\'c, J. Mi\'ci\'c, {\it Some functions reversing the order of positive operators}, Linear Algebra Appl., {\bf396} (2005), 175--187. 

\bibitem{YY1999}
T. Yamazaki, M. Yanagida, {\it Characterizations of chaotic order associated with Kantorovich inequality}, Sci. Math., {\bf 2} (1999), 37--50.
\end{thebibliography}

\vskip 0.4 true cm

\tiny(S. Furuichi) Department of Information Science, College of Humanities and Sciences, Nihon University, 3-25-40, Sakurajyousui, Setagaya-ku, Tokyo, 156-8550, Japan.

{\it E-mail address:} furuichi@chs.nihon-u.ac.jp

\vskip 0.4 true cm

\tiny(H.R. Moradi) Department of Mathematics, Payame Noor University (PNU), P.O. Box 19395-4697, Tehran, Iran.

{\it E-mail address:} hrmoradi@mshdiau.ac.ir

\end{document}